\def\centerarc[#1](#2)(#3:#4:#5);%
\newcommand{\Gr}{{\rm Gr}}
\newcommand{\RR}{\mathbb{R}}
\newcommand{\nn}{\mathbf{n}}
\newcommand{\lk}{{\rm lk}}
\newcommand{\sign}{{\rm sign}}
\newcommand{\wind}{{\rm wind}}
\newcommand{\dist}{{\rm dist}}
\newcommand{\myringerl}[1]{\fboxsep1pt\fbox{#1}}
\newcommand{\mm}{\phantom{-}}
\definecolor{myblue}{rgb}{0.11,.19,.8}
\definecolor{myred}{rgb}{0.8,.014,.04}
\newtheorem{theorem}{Theorem}[section]
\newtheorem{proposition}[theorem]{Proposition}
\theoremstyle{definition}
\newtheorem{definition}[theorem]{Definition}
\newtheorem{example}[theorem]{Example}
\newtheorem{remark}[theorem]{Remark}
\def\@captionfont{\normalfont\footnotesize}\makeatother
\newlength{\myparindent}
\title{A differential approach to Maxwell-Cremona liftings}
\author{Oleg Karpenkov, 
Fatemeh Mohammadi, 
Christian M{\"u}ller, and 
Bernd Schulze}
\date{\today}
\begin{document}

\maketitle

\begin{abstract}
  In 1864, J. C. Maxwell 
  introduced a link between self-stressed frameworks in
  the plane and piecewise linear liftings to 3-space. This connection
  has found numerous applications in areas such as discrete geometry,
  control theory and structural engineering. While there are some
  generalisations of this theory to liftings of $d$-complexes in
  $d$-space, extensions for liftings of frameworks in $d$-space for $d\geq
  3$ have been missing.
  In this paper, we introduce and study differential liftings on general graphs using
  differential forms associated with the elements of the homotopy groups of the complements to the frameworks. Such liftings 
 play the role of integrands for the classical notion of liftings for planar
  frameworks.
  We show that these differential liftings have a natural extension to 
     self-stressed frameworks in higher dimensions.
 As a result we generalise the notion of classical liftings to both graphs and multidimensional $k$-complexes in  $d$-space ($k=2,\ldots, d$).
  Finally we discuss a natural representation of generalised liftings as real-valued functions on Grassmannians.  
\end{abstract}

{\hypersetup{linkcolor=black}
{\tableofcontents}}

\section*{Introduction}

In 1864, J.~C.~Maxwell and later also L.~Cremona showed that a planar framework in $\RR^2$ with a self-stress has a dual reciprocal diagram and can be lifted vertically to a polyhedral
surface in $\RR^3$.
The present paper is dedicated to answer the following question:
\emph{What are the analogues of liftings for self-stressed frameworks in
$\RR^n$?} For $n\geq 3$, the answer to this question was not known. In
order to approach this question, we construct a new natural extension of
liftings for general frameworks in $\RR^n$ and for Rybnikov's polyhedral self-stressed complexes. This will provide the answer to the above problem.

The story begins with two papers by J.~C.~Maxwell~\cite{maxwell1864xlv,maxwell1870reciprocal} and one by L.~Cremona~\cite{cremona1872figure} in which the authors founded the field of Graphic Statics. They discovered that self-stressability of planar frameworks can be visualised by piecewise linear functions whose singular set is the framework itself. This theory is closely related to the duality between Voronoi diagrams and Delaunay tessellations.    The result of Maxwell and Cremona has important connections and applications in  rigidity theory, polyhedral combinatorics and discrete and computational geometry; see e.g.~\cite{WW82,Au87,Sugi,CW93,crapo1994spaces,hopcroft1992paradigm,streinu,streinuerr,RRS}. In particular, the technique of liftings has contributed to the solution of the Carpenter's Rule problem \cite{Con03}. Liftings are also used in a well known proof of Steinitz's Theorem from polyhedral combinatorics~\cite{richter2006realization}. (See also~\cite{EadesGarvan} and~\cite{Pak}.)
Recently, F.~Mohammadi and X.~Wu in~\cite{mohammadi2022rational} introduced a link between self-stresses and Chow rings on toric varieties related to convex polytopes.
There has also been a resurgence of interest in techniques from Graphic Statics in architecture and structural engineering \cite{mbmm}. (In  engineering, the 3-dimensional polyhedra projecting down to self-stressed frameworks in the plane are commonly known as \emph{discrete Airy stress function polyhedra}.) 
Moreover, liftings have found modern  applications in control theory \cite{nguyen2017convex, nguyen2017constructive}. For general references about liftings in discrete and computational geometry, we refer the reader to
\cite[Chapter 61]{handbookDCG}, as well as to  \cite{WAC} and  \cite{sitharam2018handbook}. 

Let us mention what is currently known about generalisations of the  Maxwell-Cremona correspondence. 
In 1993, H. Crapo and W. Whiteley investigated an extension of this correspondence to $3$-space in an unpublished manuscript \cite{crapo1993}. 3D Graphic Statics has recently been further investigated in the engineering community (see e.g. \cite{marina}).
In 1999, K.~Rybnikov introduced self-stresses on polyhedral $n$-complexes in $\RR^n$ and studied liftings of these objects~\cite{Rybnikov991,Rybnikov99}. More recently, 
A.~Rib\'o~Mor discussed liftings for planar self-touching frameworks \cite{ribo}. In 2014, C.~Borcea and I.~Streinu proved an analogue of the theorem of Maxwell and Cremona for planar periodic frameworks \cite{borcea2014liftings,BorStr}. Finally, J.~Erickson and P.~Lin studied reciprocal diagrams of toroidal frameworks in \cite{ErLin}, and Z.~Cooperband, R.~Ghrist and J.~Hansen very recently gave a description of Graphic Statics for planar and non-planar frameworks using cosheaf homology  in \cite{cooper}.

Extensions of liftings for frameworks in $\RR^n$ for $n\geq 3$ have not yet been obtained. In this paper we give a construction for this case and for more general cases. As was already done by K.~Rybnikov in the literature~\cite{Rybnikov991,Rybnikov99},
the liftings of $n$-complexes in $\RR^n$ are defined on the complement to $n$-complexes.
Our idea is to consider these complements as the set of path-connected
components denoted as $\pi_0$.
This consideration gives rise to a natural generalisation of liftings to the cases of complexes of arbitrary dimensions (in particular for frameworks in $\mathbb R^n$) and to the case of non-planar frameworks.
Here the elements of the higher fundamental groups $\pi_m$ of the complement to the frameworks/complexes will serve as the domain sets in the definition of `differential liftings'.

In this paper, we consider piecewise constant $m$-forms on the elements of $\pi_m$ of the complement to the frameworks/complexes in $\mathbb R^n$ which we call {\it differential liftings}
(see Definitions~\ref{def:diflift}, \ref{def:difflifting-n}, and~\ref{def:diflfting-polyhedra}).
Differential liftings satisfy a certain boundary condition at every vertex induced by equilibrium conditions on the stress coefficients on edges incident with this vertex.
In the classical case  the liftings of planar frameworks in $\mathbb R^2$ 
are the integral  graphs of the corresponding differential 1-forms.
We show that for every framework in $\mathbb R^n$ with a self-stress there exists a
unique differential lifting (Theorems~\ref{thm:well} and ~\ref{thm:well2}).
This provides the answer to the original question regarding generalisations of liftings for self-stressed frameworks in $\mathbb R^n$.
Further we extend this result to the case of general polyhedral complexes (Theorem~\ref{thm:well3}).
We briefly discuss generalisations of liftings in the last section of this 
paper.
It is interesting to note that
such liftings can be represented 
as functions on affine Grassmannians
(see Subsection~\ref{Grassmannian liftings}).

We work mostly with differential liftings rather than with usual liftings for the following reasons.
On the one hand the differential liftings are simpler than usual liftings in the sense that they are piecewise constant over $\pi_0$ rather than piecewise linear.
So they are invariants of the elements of $\pi_0$. Therefore, we prefer to use  them when extending differential liftings to $\pi_m$ as invariants of its elements. 
On the other hand, the differential liftings provide the same information on self-stressability as the classical liftings for planar frameworks.
Indeed, the liftings are obtained from differential liftings by integration of the latter.

{\bf This paper is organised as follows.}
We begin by providing the necessary classical notions from Geometric Rigidity Theory and Graphic Statics in Section~\ref{Preliminaries}. In Section~\ref{sec:2_dim}, we approach the notion of liftings of self-stressed frameworks from a new perspective using differential forms and introduce the natural notion of differential liftings. 
In Section~\ref{sec:n_dim}, we define this notion for frameworks in $n$-dimensional space. Finally, in   Section~\ref{sec:polytopal_embedding} we extend the concept of  differential liftings to the case of polyhedral complexes introduced by K.~Rybnikov in 1999.
We conclude the paper in Section~\ref{Definition of liftings in general settings}
with discussions on how to extend standard liftings from differential
liftings in general settings.

\section{Preliminaries}
\label{Preliminaries}

Our paper is based on two fundamental notions which are self-stressed
frameworks (Definition~\ref{def:framework}) and liftings
(Definition~\ref{def:liftings}). Self-stressed frameworks are realisations
of graphs in $\RR^n$ which are in static equilibrium. Liftings are,
broadly speaking, polyhedra in a higher dimensional space whose vertical
projections are given self-stressed frameworks.

\begin{definition}
  \label{def:framework}
  Let $G = (V, E)$ be an arbitrary finite graph without loops and without
  multiple edges where $V = \{v_1, \ldots, v_d\}$ denotes the set of
  \emph{vertices} and $E$ the set of \emph{edges}.
  We denote by $v_i v_j$ the edge joining $v_i$ and $v_j$. 
  \begin{itemize}
    \item A \emph{framework} $G(p)$ in $\RR^n$ consists of a graph
      $G = (V, E)$ and an injective map $p : V \to \RR^n$. We often denote
      $p(v_i)$ by $p_i$.
      We say that there is an \emph{edge} between $p_i$ and $p_j$ if 
      $v_i v_j$ is an edge of $G$ and denote it by $p_i p_j$. 
      Note that we require that all points are pairwise distinct.
    \item A \emph{stress} $\omega$ on a framework is an assignment of real
      scalars $\omega_{ij}$ to its edges $v_i v_j$ which are called
      \emph{stress coefficients} (also known as \emph{tensions} and
      \emph{compressions} depending on the sign) with the property
      $\omega_{ij} = \omega_{ji}$. 
      We also set $\omega_{ij} = 0$ if there is no edge between the
      corresponding vertices.
    \item A stress $\omega$ is called a \emph{self-stress} if the
      equilibrium condition 
      \begin{equation*}
        \sum\limits_{\{j \mid j \neq i\}} \omega_{ij} (p_i - p_j) = 0
      \end{equation*}
      is fulfilled at every vertex $i$.
      By $p_i - p_j$ we denote the vector from the point $p_j$ to the
      point $p_i$.  
    \item A pair $(G(p), w)$ is called a \emph{tensegrity} if $\omega$ is
      a self-stress  for the framework $G(p)$.
    \item A tensegrity $(G(p), w)$ (or stress $\omega$) is said to be
      \emph{non-zero} if there exists an edge $v_i v_j$ of the framework
      that has a non-vanishing stress coefficient $\omega_{ij} \neq 0$.
  \end{itemize}
\end{definition}

We recall in the following definition the classical notion of a polyhedral
lifting over a planar graph that has been put into relation with
self-stressed frameworks by Maxwell~\cite{maxwell1870reciprocal}.

\begin{figure}[t]
  \begin{overpic}[width=.30\textwidth]{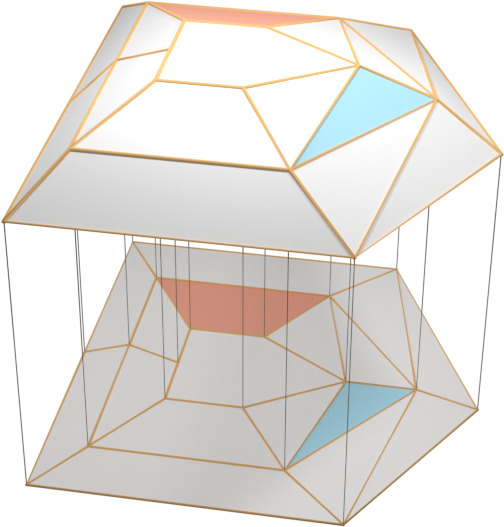}
  \end{overpic}
  \caption{Polyhedral lifting of a planar framework (see
  Definition~\ref{def:liftings}).}
  \label{fig:lifting}
\end{figure}

\begin{definition}
  \label{def:liftings}
  Let $G$ be a planar graph and $G(p)$ a corresponding framework embedded
  as planar straight-line graph in $\RR^2$.
  Denote by $F$ the set of connected components of $\RR^2\setminus G(P)$;
  we call the elements of $F$ the {\it faces}. The only unbounded face is
  called the {\it infinite face}.
  \\
  A continuous and piecewise affine function $L: \RR^2 \to \RR$ is called
  \emph{$($polyhedral$)$ lifting} of $G(p)$ if $L$ restricted to any face of
  $F$ is an affine map. See Figure~\ref{fig:lifting} for an illustration
  of the graph of such a function $L$.
\end{definition}
Note that all faces are open sets with a polygonal boundary.

The graph $(u, v, L(u, v))$ of a polyhedral lifting is therefore a
polyhedral surface whose edges and vertices vertically project to the
given graph.

\begin{theorem}[\cite{maxwell1870reciprocal}]
  \label{thm:maxwell}
  Let $G$ be a planar graph and $G(p)$ a framework embedded in $\RR^2$
  without crossing edges. Then  $G(p)$ has a non-zero self-stress if and
  only if $G(p)$ has a non-trivial polyhedral lifting $L$.
\end{theorem}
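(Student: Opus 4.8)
The plan is to prove both implications by relating, via a rotation by $\pi/2$, the jumps of the gradient of a piecewise affine lifting across the edges to the stress coefficients on those edges. Write $L|_f(x) = \langle a_f, x\rangle + b_f$ for the affine piece on a face $f$, and let $J\colon \RR^2\to\RR^2$ denote rotation by $\pi/2$. The key local observation, which I would establish first, is this: if $f$ and $g$ are the two faces adjacent to an edge $p_ip_j$, then continuity of $L$ along the whole segment $p_ip_j$ forces the gradient difference $a_f-a_g$ to be orthogonal to the edge direction $p_i-p_j$, since the linear part $\langle a_f-a_g,\cdot\rangle$ must vanish in the edge direction. Hence $a_f-a_g = \omega_{ij}\,J(p_i-p_j)$ for a unique scalar $\omega_{ij}$. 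I would fix once and for all an orientation convention distinguishing $(f,g)$ from $(g,f)$ so that this assignment is symmetric, $\omega_{ij}=\omega_{ji}$, and so that $\omega_{ij}=0$ exactly when the two adjacent pieces share the same gradient.

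For the implication from a non-trivial lifting to a non-zero self-stress, I would simply \emph{read off} the stress from the gradient jumps using the formula above. To verify the equilibrium condition at a vertex $p_i$, I would order the faces $f_1,\dots,f_k$ cyclically around $p_i$ and sum the gradient jumps across the incident edges: this sum telescopes to $a_{f_1}-a_{f_1}=0$ because the gradient must return to its starting value after one loop, while simultaneously it equals $J\bigl(\sum_{j}\omega_{ij}(p_i-p_j)\bigr)$ by the jump formula. Applying $J^{-1}$ then yields $\sum_j \omega_{ij}(p_i-p_j)=0$, which is exactly equilibrium. Since $L$ is non-trivial, i.e.\ not globally affine, at least one gradient jump is nonzero, so some $\omega_{ij}\neq 0$ and the resulting self-stress is non-zero.

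The reverse implication is the reconstruction step and is where the real work lies. Starting from a self-stress $\omega$, I would \emph{prescribe} the gradient jumps by $a_f-a_g=\omega_{ij}J(p_i-p_j)$ and then integrate. First, to obtain well-defined gradients $a_f$, I would fix a base face and propagate across edges; the only obstruction is that the prescribed jumps around each vertex $p_i$ sum to zero, but this sum is $J\bigl(\sum_j \omega_{ij}(p_i-p_j)\bigr)=J(0)=0$ by equilibrium. Because $\RR^2$ is simply connected, the cycle space of the dual graph is generated by the loops around single vertices, so consistency around each vertex star guarantees the $a_f$ are globally defined (up to a single additive constant vector). Second, to fix the heights $b_f$, I would exploit that the line through a shared edge passes through the common endpoint $p_i$: continuity of $L$ along that line forces the two adjacent affine pieces to agree \emph{at} $p_i$, so propagating $b_f$ preserves the common value $L(p_i)$ as one goes around the vertex star, making the heights consistent as well. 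The resulting function is continuous, affine on each face, and non-affine (hence non-trivial) precisely because some prescribed jump is nonzero.

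The main obstacle, and the part I would be most careful with, is this gluing in the reconstruction direction: showing that the locally defined affine pieces assemble into a single globally well-defined continuous function. This is exactly the point at which both hypotheses are used—the equilibrium condition to close up the gradients around each vertex, and the planarity and simple-connectivity of $\RR^2$ to reduce global consistency to consistency around the vertex stars. A secondary but genuine nuisance is the sign and orientation bookkeeping in the definition of $\omega_{ij}$, which must be arranged uniformly so that the same convention powers both implications and so that the telescoping and equilibrium sums match up with the correct signs.
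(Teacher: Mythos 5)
The paper offers no proof of this statement to compare against: Theorem~\ref{thm:maxwell} is quoted as Maxwell's classical result with a citation to \cite{maxwell1870reciprocal}, and the only related ingredient the paper records is Equation~\eqref{eq:normals}, itself quoted from \cite{BorStr}. Your argument is the standard Maxwell--Cremona proof, and it is correct in its essentials. Your key local observation --- continuity forces the gradient jump $a_f-a_g$ across an edge to be orthogonal to the edge, hence equal to $\omega_{ij}J(p_i-p_j)$ for a scalar $\omega_{ij}$ --- is exactly Equation~\eqref{eq:normals}, since the normalised face normal is $\nu_f=(a_f,-1)$. Moreover, your reconstruction step (propagate prescribed jumps from a base face, kill the monodromy around each vertex star via equilibrium, reduce arbitrary dual cycles to vertex stars) is structurally the same monodromy argument the paper does write out in its proof of Theorem~\ref{thm:well} for differential liftings; what you add beyond that is the integration of the heights $b_f$, which you handle correctly by noting that all edge lines at a vertex pass through that vertex, so the propagated values of $L(p_i)$ close up. Two points would need care in a full write-up: (i) the claim that vertex-star loops generate the dual cycle space is the cut-space/cycle-space duality for \emph{connected} plane graphs, so connectivity should be invoked (or the argument run per component), and bridge edges, whose two sides are the same face, force $\omega_{ij}=0$ there --- consistent with the fact that self-stresses vanish on bridges; (ii) the orientation convention tying face order to edge direction must be fixed once so that $\omega_{ij}=\omega_{ji}$ and so that all signs in the vertex-star telescoping sum agree, which you correctly flag as the main bookkeeping burden.
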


\noindent
\begin{minipage}[b][][t]{.71\textwidth}
  \begin{remark}
    Let us mention the classical notion of the Maxwell reciprocal
    diagram. The \emph{Maxwell reciprocal diagram} of a framework $G(p)$
    in $\RR^2$ is a framework $G^*(p^*)$ defined on the combinatorial dual
    graph $G^*$ such that corresponding edges of the frameworks $G$ and
    $G^*$ are orthogonal to each other with lengths given by the
    corresponding stress coefficients. See the figure aside for an
    illustration. It is important to note that the existence of the
    Maxwell reciprocal diagram $G^*(p^*)$ is equivalent to non-zero
    self-stressability of $G(p)$.
  \end{remark}
\end{minipage}
\hfill\hfill
\begin{minipage}[b][][t]{.22\textwidth}
  \begin{overpic}[width=\textwidth]{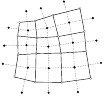}
  \end{overpic}
\end{minipage}
\hfill{}

For $G(p)$ with self-stress $\omega$, let us denote the normal vector to the face of the
polyhedral lifting $L$ corresponding to a face $f\in F$  by $\nu_f$ and let these vectors be ``normalized''
to $\nu_f = (*, *, -1)$. Then from~\cite[Eqn.~(4)]{BorStr} we know that 
\begin{equation}
  \label{eq:normals}
  \nu_{f_2} - \nu_{f_1} = \omega_{ij} (p_j - p_i)^\perp
\end{equation}
for neighbouring faces $f_1, f_2 \in F$ with common edge $p_i p_j$.

\section{Differential liftings in $\RR^2$}
\label{sec:2_dim}

In this section, we generalise the concept of the classical polyhedral 
lifting (Definition~\ref{def:liftings}). By Maxwell's theorem
(Theorem~\ref{thm:maxwell}) the lifting exists exactly for self-stressed
frameworks over their embedding as planar straight-line graph.
Instead of considering the lifting itself we now switch to a
discrete differential of that lifting. It turns out that such a discrete
differential (which we call \emph{differential lifting}, cf.\
Definition~\ref{def:diflift}) exists also for frameworks which are not
embedded as planar graphs as well as for frameworks over non-planar
graphs.

While an embedding of a planar graph constitutes a cell decomposition of
$\RR^2$ into faces, a general realisation of a graph in $\RR^2$ might have
crossing edges and divides the plane into cells which we call
\emph{chambers}.

\begin{definition}
  For a framework $G(p)$, a \emph{chamber} is a connected component of the
  complement of $G(p)$ to $\RR^2$. Two different chambers are neighbours
  if they share an edge (see Figure~\ref{fig:chambers} left) or if they
  share a non-empty open sub-segment of an edge (see
  Figure~\ref{fig:chambers} right). 
\end{definition}

Note that as the framework $G(p)$ is finite, there is always an unbounded
chamber $C_\infty$, the one ``around'' the framework.

A key idea in our paper is to relate differential $1$-forms to the chambers
of the framework. For that we identify vectors $v = (v_1, v_2)$ with
differential $1$-forms $dv = v_1\, dx + v_2\, dy$ via linear combinations
of coordinate differentials $dx$ and $dy$. The space of differential
$1$-forms is denoted by $\Omega^1(\RR^2)$. 
For a given framework $G(p)$, we will consider maps from the set of
chambers to the space of constant $1$-forms on $\RR^2$. The space of
chambers is canonically identified with the elements of
$\pi_0(\RR^2 \setminus G(p))$, the set of path components. Once the map
from the set of chambers to $\pi_0(\RR^2 \setminus G(p))$ is fixed, we can
say that the corresponding $1$-forms are \emph{associated} with the
chambers. 

\begin{definition}
  \label{def:nei1}
  Consider a framework $G(p)$ with a self-stress $\omega$. Let $C_1$ and
  $C_2$ be two neighbouring chambers of $G(p)$ along $p_i p_j$ (i.e.\ they are
  either sharing the same edge $p_i p_j$ or a non-empty open sub-segment of it).
  The stress coefficient of $p_i p_j$ is $\omega_{ij}$. Let $\alpha_1$ and
  $\alpha_2$ be two constant $1$-forms associated with $C_1$ and $C_2$,
  respectively. We say that $\alpha_1$ and $\alpha_2$ satisfy the
  \emph{neighbouring condition} if 
  \begin{equation}
    \label{eq:nei1}
    \alpha_2 - \alpha_1 
    =
    \sign(\det(\nn_{12}, p_i - p_j)) \cdot \omega_{ij} \cdot d(p_i{-}p_j),
  \end{equation}
  where $\nn_{12}$ is the unit normal vector to $p_i - p_j$ pointing in
  the direction from $C_1$ to $C_2$ (cf.\ Figure~\ref{fig:chambers}). 
\end{definition}

Note that the neighbouring condition is independent of the order of
$\alpha_1$ and $\alpha_2$.

\begin{figure}[h]
  \hfill
  \begin{overpic}[width=.38\textwidth]{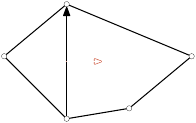}
    \put(27,61){$p_i$}
    \put(27,-1){$p_j$}
    \put(17,30){$C_1$}
    \put(67,30){$C_2$}
    \put(5,5){$C_\infty$}
    \put(39,35){$\nn_{12}$}
  \end{overpic}
  \hfill
  \begin{overpic}[width=.38\textwidth]{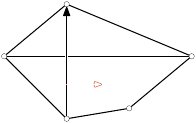}
    \put(27,61){$p_i$}
    \put(27,-1){$p_j$}
    \put(22,22){$C_1$}
    \put(64,22){$C_2$}
    \put(48,42){$C_3$}
    \put(22,42){$C_4$}
    \put(5,5){$C_\infty$}
    \put(39,23){$\nn_{12}$}
  \end{overpic}
  \hfill{}
  \caption{Two different general frameworks $G(p)$ and their
  chambers. Note that two chambers are neighbours if they share an edge
  or a non-empty open sub-segment of an edge.
  \emph{Left:} The framework consists of two neighbouring faces, a triangle
  and a quadrilateral. The two neighbouring chambers $C_1$ and $C_2$ share
  the common edge $p_i p_j$. 
  \emph{Right:} The framework has two crossing edges which do not
  intersect in a vertex of the framework. The pentagon is therefore
  divided into four chambers. Two neighbouring chambers, e.g., $C_1$ and
  $C_2$, share a non-empty open sub-segment of the edge $p_i p_j$. 
  }
\label{fig:chambers}
\end{figure}

The following definition of differential liftings is central for the
present paper. It generalises the concept of the classical polyhedral
lifting (Definition~\ref{def:liftings}), in a sense, to a discrete differential of
a lifting which may or may not exist, as explained below.

\begin{definition}\label{diff-liff}
  \label{def:diflift}
  For the framework $G(p)$ and a self-stress $\omega$ of $G(p)$, we call
  a mapping 
  \begin{equation*}
    l_{\omega, p} : 
    \pi_0(\RR^2 \setminus G(p)) \longrightarrow \Omega^1(\RR^2)
  \end{equation*}  
  a \emph{differential lifting} if it satisfies the \emph{neighbouring
  condition} for each pair of neighbouring chambers and if it takes zero
  on the unbounded chamber of $C_\infty \in \pi_0(\RR^2 \setminus G(p))$.
\end{definition}

In Theorem~\ref{thm:well}, we show that a differential lifting as defined
in Definition~\ref{def:diflift} exists and that it is unique.
Note that the set of differential liftings over a fixed framework $G(p)$
is a vector space with
\begin{equation*}
  l_{\omega^1, p} + l_{\omega^2, p} = l_{\omega^1 + \omega^2, p}
  \qquad\text{and}\qquad
  \lambda l_{\omega, p} = l_{\lambda \omega, p}.
\end{equation*}

\begin{theorem}
  \label{thm:well}
  For every framework $G(p)$ with a self-stress $\omega$, there exists a
  unique differential lifting $l_{\omega, p}$.  
\end{theorem}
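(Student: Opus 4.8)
The plan is to encode the chambers and their neighbouring relation as a connected graph $\Gamma$ and to define the lifting by ``integrating'' the neighbouring condition~\eqref{eq:nei1} along paths in $\Gamma$, reducing well-definedness to a local vanishing statement at the $0$-cells of the underlying segment arrangement. The segments of $G(p)$ cut $\RR^2$ into an arrangement whose $2$-cells are exactly the chambers; let $\Gamma$ denote the dual adjacency graph, with the chambers as vertices and the neighbouring pairs as edges. Since the complement of finitely many segments in $\RR^2$ is connected through its chambers, $\Gamma$ is connected, and every chamber $C$ is joined to $C_\infty$ by a path in $\Gamma$. Uniqueness is then immediate: once $l_{\omega,p}(C_\infty)=0$ is fixed, \eqref{eq:nei1} prescribes the increment of $l_{\omega,p}$ across each edge of $\Gamma$, so the value on any chamber equals the sum of these increments along a connecting path and is thereby forced.

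For existence I would \emph{define} $l_{\omega,p}(C)$ to be this sum of increments along a path from $C_\infty$ to $C$; the only point to verify is independence of the path, equivalently that the signed sum of increments around every cycle of $\Gamma$ vanishes. By planar duality it suffices to check this on a generating set of cycles, namely the small loops encircling the bounded $0$-cells of the arrangement --- the framework vertices $p_i$ and the crossing points of edges --- since such a loop crosses precisely the segments incident to that $0$-cell.

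The core is the local computation. Traversing a small counterclockwise loop about a framework vertex $p_i$, a short orientation check (the determinant of a unit tangent against the corresponding outward radial direction equals $-1$) shows that crossing the incident edge $p_i p_j$ contributes exactly $\omega_{ij}\,d(p_i{-}p_j)$; summing over all edges at $p_i$ gives $\sum_j \omega_{ij}\,d(p_i{-}p_j) = d\bigl(\sum_j \omega_{ij}(p_i - p_j)\bigr) = 0$, which is precisely the equilibrium condition of the self-stress. At a crossing point that is not a vertex, each through-going edge $p_i p_j$ is met twice, along opposite rays; since $\nn_{12}$ is fixed by the sense of travel while $p_i - p_j$ points outward along one ray and inward along the other, the sign $\sign(\det(\nn_{12}, p_i - p_j))$ flips and the two contributions $\pm\,\omega_{ij}\,d(p_i{-}p_j)$ cancel with no further hypothesis. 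Hence every generating loop sum vanishes, the definition is path-independent, and the differential lifting $l_{\omega,p}$ exists.

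I expect the main obstacle to be the orientation bookkeeping in~\eqref{eq:nei1}: matching $\sign(\det(\nn_{12}, p_i - p_j))$ to the sense of traversal so that the vertex loop sum collapses to $d\bigl(\sum_j \omega_{ij}(p_i - p_j)\bigr)$, and making precise the topological reduction that loops around the $0$-cells generate the cycle space of $\Gamma$.
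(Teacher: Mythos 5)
Your proposal is correct, and its core is the same as the paper's: fix the value $0$ on $C_\infty$, propagate values across walls via~\eqref{eq:nei1}, reduce well-definedness to the vanishing of the signed increment sum around closed chamber paths, reduce those to elementary loops around $0$-cells, and kill the loop around a framework vertex $p_i$ by observing that a counterclockwise traversal makes every sign $+1$, so the sum collapses to $d\bigl(\sum_j \omega_{ij}(p_i-p_j)\bigr)=0$ by equilibrium. The genuine difference is the treatment of crossing edges. The paper removes them by subdivision: it inserts the crossing points as new vertices, rescales the stress to $\tilde\omega_{kl}=\omega_{ij}/\lambda_{kl}$, verifies that this is again a self-stress (opposite forces cancel at the new vertices) and that the neighbouring condition across the subdivided edges is unchanged, thereby reducing to the non-crossing case. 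You instead treat crossing points as ordinary $0$-cells of the segment arrangement and observe that a small loop around one meets each through-going edge twice with opposite signs of $\det(\nn_{12},p_i-p_j)$ (your moving-frame bookkeeping here is right: relative to the sense of travel the normal keeps a fixed orientation, while the fixed vector $p_i-p_j$ is outward along one ray and inward along the other), so the two contributions cancel with no hypothesis on $\omega$. Your route is more uniform and shorter --- one local computation per type of $0$-cell, no auxiliary stress to construct --- and it makes transparent that no equilibrium condition is needed at crossings; the paper's subdivision, on the other hand, is a reusable reduction of arbitrary frameworks to planar straight-line ones and the verification that $\tilde\omega$ is again a self-stress has independent interest. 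One caveat applies equally to both write-ups: the claim that loops around bounded $0$-cells generate all cycles (your appeal to planar duality, the paper's ``decomposed into elementary closed paths'') is left at the level of standard planar topology; a fully rigorous version should also note that the increments are antisymmetric under reversing a wall crossing (the paper's Case~1), so that backtracks cancel and the increment sum is a well-defined function on the cycle space of the dual graph.
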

\begin{proof}
  Let us first assume that our framework $G(p)$ has no \emph{crossing
  edges}.

  By Definition~\ref{diff-liff}, the value of the unbounded chamber
  $C_\infty$ is set to be zero. 
  Any other value of $l_{\omega, p}$ can be obtained by adding
  differential forms via the neighbouring condition,
  Equation~\eqref{eq:nei1}, along a path of chambers starting at
  $C_\infty$, where a \emph{path} between two chambers is a sequence of
  chambers that share a common edge.
  Our aim is to show that different paths with the same starting and
  ending chambers do not give different values at the last chamber.

  Consider a path $\mathcal{C}$ of chambers $C_1, \ldots, C_k$ with
  associated differential forms denoted by $\alpha_1, \ldots, \alpha_k$.
  Adding differential forms along the path via Equation~\eqref{eq:nei1}
  yields
  $$
  \alpha_k = \alpha_1 + \alpha_{\mathcal{C}},
  \qquad \hbox{where} \qquad  
  \alpha_{\mathcal{C}} = \sum\limits_{r = 2}^k
  (\alpha_r - \alpha_{r - 1}),
  $$
  where $(\alpha_r - \alpha_{r - 1})$ stands for the right hand side of
  Equation~\eqref{eq:nei1}.

  For the reverse path $C_k, \ldots, C_1$, the differential forms then
  satisfy $\alpha_1 = \alpha_k - \alpha_{\mathcal{C}}$.

  Let us consider two paths $\mathcal{C}_1$ and $\mathcal{C}_2$
  with the same starting chamber $C_s$ and the same end chamber $C_e$.
  We will show that these two (potentially different) paths give the same
  value at the last chamber. Adding differential forms along the two paths
  yields 
  \begin{equation*}
    \alpha^1_e = \alpha_s + \alpha_{\mathcal{C}_1}
    \qquad\text{and}\qquad
    \alpha^2_e = \alpha_s + \alpha_{\mathcal{C}_2}.
  \end{equation*}
  Consequently, adding differential forms along the path
  $\mathcal{C}_1$ followed by the reversed path $\mathcal{C}_2$ 
  (which combined is a \emph{closed} path as the starting chamber is the
  same as the end chamber) adds 
  $\alpha_{\mathcal{C}_1} - \alpha_{\mathcal{C}_2}$ to $\alpha_s$.
  Therefore, the two paths $\mathcal{C}_1$ and $\mathcal{C}_2$ imply equal
  values $\alpha^1_e = \alpha^2_e$ at $C_e$ if and only if the closed path
  above just adds zero to $\alpha_s$.

  The uniqueness of the definition of $l_{\omega, p}$ is therefore
  equivalent to showing that adding differential forms via Equation~\eqref{eq:nei1},
  along closed paths, adds zero to the starting chamber. We start with two
  elementary cycles before we cover a general closed path.

  \emph{Case 1}: Let us consider the closed path of chambers $C_1, C_2,
  C_3 = C_1$. This path corresponds to passing through an edge and going
  back. Let us assume $l_{\omega, p}(C_1) = \alpha_1$. Adding the values
  along this path yields first (cf.\ Figure~\ref{fig:chambers})
  \begin{equation*}
    \alpha_2 
    = 
    \alpha_1 
    + 
    \omega_{ij} \cdot \sign(\det(\nn_{12}, p_i - p_j)) \cdot d(p_i{-}p_j),
  \end{equation*}
  and further
  \begin{equation*}
    \alpha_3 
    = 
    \alpha_2 
    + 
    \omega_{ij} \cdot \sign(\det(\nn_{23}, p_i - p_j)) \cdot d(p_i{-}p_j).
  \end{equation*}
  since $\nn_{23} = \nn_{21} = -\nn_{12}$, inserting these values into the
  above formulas implies $\alpha_3 = \alpha_1$. Consequently nothing is
  added to the starting chamber.

  \emph{Case 2}: Let us consider a closed path around a point $p_i$ in
  counterclockwise direction, $C_1, \ldots, C_k = C_1$. Recursively
  inserting Equation~\eqref{eq:nei1} yields
  \begin{equation*}
  \alpha_k 
  =
  \alpha_1 
  + 
  \sum_{j = 1}^k \omega_{ij} \cdot \sign(\det(\nn_{j, j + 1}, p_i - p_j)) 
  \cdot d(p_i{-}p_j)
  =
  \alpha_1 + d\Big(\sum_{j = 1}^k \omega_{ij} (p_i - p_j)\Big) 
  = 
  \alpha_1,
  \end{equation*}
  where we take indices modulo $k$, i.e.\ $\nn_{k, k + 1} = \nn_{k, 1}$.
  The second equality holds true due to the linearity of the differential
  operator, and because all the signs in the sum are positive (because of
  counterclockwise orientation). The third equality holds by the equilibrium
  condition for self-stressed frameworks at $p_i$.
       
  \emph{General case}: Adding differential forms via~\eqref{eq:nei1} along
  any closed path adds zero to the starting chamber since it can be
  decomposed into elementary closed paths considered in Cases~1 and~2. 

  \noindent
  \begin{minipage}{.58\textwidth}
    \hspace*{\myparindent}
    Let us now consider the case where the framework $G(p)$ with
    self-stress $\omega$ has \emph{crossing edges} which we excluded so
    far. We can easily turn it into a self-stressed framework without
    crossing edges. First, we add all crossing points to the set of
    vertices and obtain a framework $G(\tilde p)$. Second, we modify the
    stress function $\omega$ to $\tilde \omega$ by the following rule:
    Consider two neighbouring vertices $\tilde p_k \tilde p_l$ forming an
    edge of $G(\tilde p)$ which lies on an ``old'' edge $p_i p_j$. Then
    the new stress coefficient $\tilde \omega_{kl}$ is defined by
    $$
    \tilde \omega_{kl} := \frac{\omega_{ij}}{\lambda_{kl}},
    $$
    where $\lambda_{kl}$ is the scaling factor between these two edges:
    $\|p_k - p_l\| = \lambda_{kl} \|p_i - p_j\|$.
    \end{minipage}
    \hfill
    \begin{minipage}{.36\textwidth}
      \begin{overpic}[width=\textwidth]{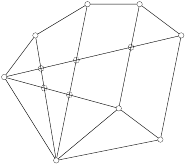}
        \put(-11,51){$p_i{=}\tilde p_i$}
        \put(96,73){$p_j$}
        \put(15,53){$\tilde p_k$}
        \put(35,58){$\tilde p_l$}
        \put(62,65){$\tilde p_m$}
        \put(29,45){$C_1$}
        \put(29,65){$C_2$}
      \end{overpic}
    \end{minipage}

    This is a self-stress since at every ``new'' vertex $\tilde p_l$
    opposite forces cancel: 
    $$
    \tilde\omega_{kl} (\tilde p_k - \tilde p_l) 
    + 
    \tilde\omega_{ml} (\tilde p_m - \tilde p_l)
    =
    \frac{\omega_{ij}}{\lambda_{kl}} \lambda_{kl} (p_i - p_j) 
    +
    \frac{\omega_{ij}}{\lambda_{ml}} \lambda_{ml} (p_j - p_i) 
    = 0.
    $$
    Analogously at ``old'' vertices $p_i$ the equilibrium condition still
    holds as any ``new'' edge $\tilde p_k \tilde p_i$ contributes the same
    force:
    $$
    \tilde\omega_{ki} (\tilde p_k - \tilde p_i) 
    =
    \frac{\omega_{ij}}{\lambda_{ki}} \lambda_{ki} (p_j - p_i) 
    =
    \omega_{ij} (p_j - p_i).
    $$

    Now note that the neighbouring condition~\eqref{eq:nei1}  along
    a ``new'' edge is the same as before when considered as two
    neighbouring chambers along a non-empty open sub-segment of an edge
    since
    \begin{equation*}
      \alpha_2 - \alpha_1 
      =
      \sign(\det(\nn_{12}, \tilde p_k - \tilde p_l)) \cdot 
      \tilde\omega_{kl} \cdot d(\tilde p_k{-}\tilde p_l)
      =
      \sign(\det(\nn_{12}, p_i - p_j)) \cdot 
      \omega_{ij} \cdot d(p_i{-}p_j),
    \end{equation*}
    which concludes the proof.
\end{proof}

\begin{figure}[ht]
  \hfill
  \begin{overpic}[width=.38\textwidth]{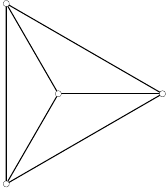}
    \put(33,47){$p_1$}
    \put(83,44){$p_2$}
    \put(-4,92){$p_3$}
    \put(-4,-2){$p_4$}
    \put(-4,50){\small$-1$}
    \put(34,81){\small$-1$}
    \put(34,15){\small$-1$}
    \put(20,71){\small$3$}
    \put(20,25){\small$3$}
    \put(57,46){\small$3$}
    \put(60,87){\small\myringerl{$0$}}
    \put(6,47){\small\myringerl{$-\sqrt{3}\, dy$}}
    \put(29,37){\small\myringerl{$\frac{3}{2} dx + \frac{\sqrt{3}}{2} dy$}}
    \put(29,58){\small\myringerl{$-\frac{3}{2} dx + \frac{\sqrt{3}}{2} dy$}}
    \put(47,87){$C_\infty$}
    \put(31,69){$C_1$}
    \put(11,60){$C_2$}
    \put(31,27){$C_3$}
  \end{overpic}
  \hfill
  \begin{overpic}[width=.40\textwidth]{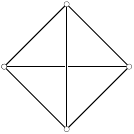}
    \put(96,55){$q_1$}
    \put(53,97){$q_2$}
    \put(-1,43){$q_3$}
    \put(54,0){$q_4$}
    \put(23,79){\small$-1$}
    \put(23,18){\small$-1$}
    \put(71,79){\small$-1$}
    \put(71,18){\small$-1$}
    \put(52,26){\small$1$}
    \put(30,52){\small$1$}
    \put(80,87){\small\myringerl{$0$}}
    \put(58,37){\small\myringerl{$dx + dy$}}
    \put(60,60){\small\myringerl{$-dx + dy$}}
    \put(20,60){\small\myringerl{$-dx - dy$}}
    \put(21,37){\small\myringerl{$dx - dy$}}
    \put(66,91){$C_\infty$}
    \put(59,71){$C_1$}
    \put(35,71){$C_2$}
    \put(35,24){$C_3$}
    \put(61,24){$C_4$}
  \end{overpic}
  \hfill{}
\caption{Two frameworks corresponding to the combinatorial graph $K_4$.
  With the vertices and stresses as detailed in Example~\ref{ex:k4} both
  frameworks are self-stressed. The chambers are denoted by $C_i$ and the
  associated $1$-forms are written with boxes. The \emph{left} framework
  $G(p)$ has a standard lifting $L$ in the sense of
  Theorem~\ref{thm:maxwell} as well as a differential lifting $l_{\omega,
  p}$. The framework $G(q)$ on the \emph{right} does not have a standard
  lifting $L$ as in Theorem~\ref{thm:maxwell}. It does, however,
  have a differential lifting $l_{\omega, q}$.
  }
  \label{fig:k4}
\end{figure}

\begin{example}
  \label{ex:k4}
  Let us consider $K_4$, the smallest graph in the plane with a
  self-stress for all framework realisations.
  We consider two distinct frameworks $K_4(p)$ and $K_4(q)$ as depicted in 
  Figure~\ref{fig:k4}.
  Here the framework in Figure~\ref{fig:k4} (left) is planar whereas the
  framework in Figure~\ref{fig:k4} (right) is not.
  We have (cf.\ Figure~\ref{fig:k4} left) 
  \begin{equation*}
    \textstyle
    p_1 = (0, 0), 
    \quad
    p_2 = (1, 0), 
    \quad
    p_3 = (-\frac{1}{2}, \frac{\sqrt{3}}{2}), 
    \quad
    p_4 = (-\frac{1}{2}, -\frac{\sqrt{3}}{2}),
  \end{equation*}
  and stress coefficients $\omega_{12} = \omega_{13} = \omega_{14} = 3$, and 
  $\omega_{23} = \omega_{34} = \omega_{24} = -1$.
  The chambers are denoted by
  \begin{equation*}
    C_\infty = p_2 p_3 p_4,
    \quad
    C_1 = p_1 p_2 p_3,
    \quad
    C_2 = p_1 p_3 p_4,
    \quad
    C_3 = p_1 p_4 p_2.
  \end{equation*}
  The $1$-form associated to the unbounded chamber $C_\infty$ is per
  definition set to $0$. The associated $1$-forms corresponding to the
  other chambers are
   \begin{equation*}
    \textstyle
    l_{\omega, p}(C_1) = -\frac{3}{2} dx + \frac{\sqrt{3}}{2} dy,
    \quad
    l_{\omega, p}(C_2) = -\sqrt{3}\, dy,
    \quad
    l_{\omega, p}(C_3) = \frac{3}{2} dx + \frac{\sqrt{3}}{2} dy.
  \end{equation*}
  For the non-planar realisation of $K_4$ (Figure~\ref{fig:k4} right)
  we have
  \begin{equation*}
    q_1 = (1, 0), 
    \quad
    q_2 = (0, 1), 
    \quad
    q_3 = (-1, 0), 
    \quad
    q_4 = (0, -1),
  \end{equation*}
  where the edges $q_1 q_3$ and $q_2 q_4$ cross each other at the origin
  denoted by $O = (0, 0)$. The stress coefficients are 
  $\omega_{12} = \omega_{23} = \omega_{34} = \omega_{14} = -1$, and 
  $\omega_{13} = \omega_{24} = 1$.
  The chambers are denoted by 
  \begin{equation*}
    C_\infty = q_1 q_2 q_3 q_4,
    \quad
    C_1 = O q_1 q_2,
    \quad
    C_2 = O q_2 q_3, 
    \quad
    C_3 = O q_3 q_4, 
    \quad
    C_4 = O q_1 q_4.
  \end{equation*}
  The $1$-form associated to the unbounded chamber $C_\infty$ is per
  definition set to $0$. The associated $1$-forms corresponding to the
  other chambers are
  \begin{equation*}
    l_{\omega, q}(C_1) = dx - dy,
    \quad
    l_{\omega, q}(C_2) = -dx - dy,
    \quad
    l_{\omega, q}(C_3) = -dx + dy,
    \quad
    l_{\omega, q}(C_4) = dx + dy.
  \end{equation*}
  The framework $G(q)$ in Figure~\ref{fig:k4} (right) does not have a
  standard lifting as in Theorem~\ref{thm:maxwell}, but as we showed,
  it has a differential lifting $l_{\omega, q}$.
\end{example} 

\begin{example}
  \label{ex:prism}
  Another classical example is a framework with the combinatorics of a
  triangular prism as depicted in Figure~\ref{fig:prism}. The six vertices
  are
  \begin{equation*}
    p_1 = (-1, -1),
    \quad
    p_2 = (3, -1), 
    \quad
    p_3 = (3, 1), 
    \quad
    p_4 = (-1, 1), 
    \quad
    p_5 = (0, 0),
    \quad
    p_6 = (2, 0),
  \end{equation*}
  and the stress coefficients are
  $\omega_{12} = \omega_{34} = -1$, $\omega_{23} = \omega_{14} = -2$, 
  and $\omega_{15} = \omega_{45} = \omega_{26} = \omega_{36} = 4$.
  The chambers are denoted by 
  \begin{equation*}
    C_\infty = p_1 p_2 p_3 p_4,
    \quad
    C_1 = p_1 p_2 p_6 p_5,
    \quad
    C_2 = p_2 p_3 p_6, 
    \quad
    C_3 = p_3 p_4 p_5 p_6,
    \quad
    C_4 = p_1 p_4 p_5.
  \end{equation*}
  The $1$-form associated to the unbounded chamber $C_\infty$ is per
  definition set to $0$. The associated $1$-forms corresponding to the
  other chambers are
  \begin{equation*}
    l_{\omega, p}(C_1) = -4\, dx,
    \quad
    l_{\omega, p}(C_2) = 4\, dy,
    \quad
    l_{\omega, p}(C_3) = 4\, dx,
    \quad
    l_{\omega, p}(C_4) = -4\, dy.
  \end{equation*}
\end{example}

\begin{figure}[h]
  \hfill
  \begin{overpic}[width=.47\textwidth]{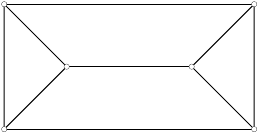}
    \put(-5,1){$p_1$}
    \put(100,1){$p_2$}
    \put(100,49){$p_3$}
    \put(-5,49){$p_4$}
    \put(26,22){$p_5$}
    \put(70,22){$p_6$}
    \put(-6,25){\small\myringerl{$0$}}
    \put(6,25){\small\myringerl{$-4\, dy$}}
    \put(84,25){\small\myringerl{$4\, dy$}}
    \put(45,15){\small\myringerl{$-4\, dx$}}
    \put(45,35){\small\myringerl{$4\, dx$}}
    \put(-8,14){$C_\infty$}
    \put(49,7){$C_1$}
    \put(90,15){$C_2$}
    \put(47,43){$C_3$}
    \put(7,15){$C_4$}
  \end{overpic}
  \hfill{}
  \caption{A framework corresponding to a combinatorial triangular prism. With the
  vertices and stress coefficients as detailed in Example~\ref{ex:prism}, the
  framework is self-stressed. The chambers are denoted by $C_i$ and
  the associated $1$-forms are written with boxes. The framework
  $G(p)$ has a standard lifting $L$ in the sense of
  Theorem~\ref{thm:maxwell} and it has a differential lifting denoted by $l_{\omega, p}$.
  }
  \label{fig:prism}
\end{figure}

The relation between polygonal liftings and differential liftings on 
self-stressed planar frameworks corresponding to planar graphs is given in
the following proposition.

\begin{proposition}
  Let $(G(p), \omega)$ be a self-stressed planar framework corresponding
  to a planar graph. Let furthermore $L$ denote a polyhedral lifting as in 
  Definition~\ref{def:liftings} and let $l_{\omega, p}$ be a differential
  lifting. Consider the ``normalized'' normal vector $\nu_C = (*, *, -1)$
  of the polyhedral lifting corresponding to the chamber $C$ and let
  $\alpha_C = l_{\omega, p}(C)$ be the associated $1$-form. Then
  \begin{equation*}
    \star \alpha_C 
    = 
    \pm \nu_C \cdot \Big(\begin{smallmatrix}dx\\dy\\0\end{smallmatrix}\Big),
  \end{equation*}
  where $\star$ denotes the \emph{Hodge star operator}, i.e.\ $\star(a\,
  dx + b\, dy) = -b\, dx + a\, dy$.
\end{proposition}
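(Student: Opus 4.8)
The plan is to show that both sides of the claimed identity, regarded as functions on the set of chambers $\pi_0(\RR^2\setminus G(p))$, obey the same increment rule across neighbouring chambers and agree on $C_\infty$, hence coincide up to a single global sign. Write $\nu_C=(n_C,-1)$ and identify the horizontal part $n_C\in\RR^2$ with the $1$-form $\nu_C\cdot(dx,dy,0)^T$, which is exactly the right-hand side of the Proposition. Since a polyhedral lifting is only determined up to adding a global affine function $ax+by+c$ — which shifts every $n_C$ by the same vector $(a,b)$ — I first normalise $L$ by subtracting the affine function that agrees with $L$ over the infinite chamber, so that $n_{C_\infty}=0$. (This normalisation is in fact forced: evaluating the asserted identity on $C_\infty$, where $\alpha_{C_\infty}=0$ by Definition~\ref{def:diflift}, already requires $n_{C_\infty}=0$.) With it, both $\star\alpha_C$ and $n_C$ vanish on the unbounded chamber.

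Next I compare increments. Recall that the Hodge star used here satisfies $\star\,dv=d(v^\perp)$ with $v^\perp=(-v_2,v_1)$ the counter-clockwise quarter turn. Applying $\star$ to the neighbouring condition~\eqref{eq:nei1} for $C_1\to C_2$ across $p_ip_j$ and using linearity gives
\[
  \star\alpha_2-\star\alpha_1=\sign(\det(\nn_{12},p_i-p_j))\,\omega_{ij}\,d\big((p_i-p_j)^\perp\big)=-\,\sign(\det(\nn_{12},p_i-p_j))\,\omega_{ij}\,d\big((p_j-p_i)^\perp\big).
\]
On the other hand, Equation~\eqref{eq:normals} records the increment of the normals; written for the ordered pair $(C_1,C_2)$ and ordered edge $(p_i,p_j)$ it reads $n_{C_2}-n_{C_1}=\sigma_{12}\,\omega_{ij}\,(p_j-p_i)^\perp$, where $\sigma_{12}=\pm1$ is the sign fixing the orientation convention under which~\eqref{eq:normals} holds.

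The main point — and the only real obstacle — is to check that these two sign bookkeepings agree edge by edge, i.e.\ that $\sigma_{12}=\sign(\det(\nn_{12},p_i-p_j))$; otherwise the ratio of the two sides would not be a global constant. This reduces to a short linear-algebra identity. Using $\det(u,w)=\langle u^\perp,w\rangle$ together with the antisymmetry of the determinant, one computes
\[
  \big\langle (p_j-p_i)^\perp,\ \nn_{12}\big\rangle=\det\big(p_j-p_i,\ \nn_{12}\big)=\det(\nn_{12},p_i-p_j).
\]
Since by definition $\sigma_{12}=+1$ exactly when $(p_j-p_i)^\perp$ points from $C_1$ towards $C_2$, i.e.\ when $\langle (p_j-p_i)^\perp,\nn_{12}\rangle>0$, this identity yields $\sigma_{12}=\sign(\det(\nn_{12},p_i-p_j))$. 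Substituting back, the increment of $\star\alpha_C$ equals $-1$ times the increment of $n_C$ across every shared edge, with the constant $-1$ now independent of the edge.

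Finally, since $\star\alpha_C$ and $-n_C$ have identical increments across neighbouring chambers and agree (both zero) on $C_\infty$, they agree on every chamber: starting from $C_\infty$ and accumulating along any chamber path — well defined by the path-independence established in the proof of Theorem~\ref{thm:well} — produces the same value on both sides. Hence $\star\alpha_C=-\,n_C=-\,\nu_C\cdot(dx,dy,0)^T$, which is the asserted identity with the global sign pinned down by our conventions; a different choice of orientation for $\perp$, of the Hodge star, or of the sign normalisation of $\nu_C$ flips this constant, accounting for the $\pm$ in the statement.
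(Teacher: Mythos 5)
Your proof is correct and takes essentially the same approach as the paper: the paper's entire proof is the single sentence that the claim follows from Equations~\eqref{eq:normals} and~\eqref{eq:nei1}, which is precisely the increment comparison you carry out. Your write-up additionally makes explicit two points the paper leaves implicit --- the affine normalisation of $L$ forced by $\alpha_{C_\infty}=0$, and the edge-by-edge agreement of the sign conventions in \eqref{eq:normals} and \eqref{eq:nei1} (any discrepancy in the latter being absorbed into the global $\pm$) --- which is a useful elaboration rather than a different method.
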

\begin{proof}
  This follows from Equations~\eqref{eq:normals} and~\eqref{eq:nei1}.
\end{proof}

The condition of the graph being planar in Theorem~\ref{thm:maxwell}
is not needed when we consider differential liftings instead of polyhedral
liftings.

\begin{theorem}
  \label{thm:bij}
  Let $G$ be any finite simple graph and $G(p)$ be a framework in $\RR^2$.
  The mapping $(G(p), \omega) \mapsto l_{\omega, p}$ is a bijection
  between the space of self-stresses and the space of differential
  liftings.
\end{theorem}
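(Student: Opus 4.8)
The plan is to exploit the linearity of the correspondence $\omega \mapsto l_{\omega,p}$ (recorded just after Definition~\ref{def:diflift}) together with Theorem~\ref{thm:well}, which already guarantees that this map is well defined. Both sides are vector spaces, so it suffices to verify injectivity and surjectivity, where I read the target as the intrinsic space of \emph{differential liftings}: maps $l : \pi_0(\RR^2\setminus G(p)) \to \Omega^1(\RR^2)$ with $l(C_\infty)=0$ such that, for every pair of neighbouring chambers $C_1,C_2$ meeting along (a sub-segment of) an edge $p_ip_j$, the difference $l(C_2)-l(C_1)$ is a scalar multiple of $d(p_i-p_j)$. Being linear, the map $\omega\mapsto l_{\omega,p}$ lands in this space and the statement reduces to trivial kernel plus surjectivity.

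For injectivity I would compute the kernel. Suppose $l_{\omega,p}\equiv 0$. Then for each edge $p_ip_j$ separating two \emph{distinct} chambers $C_1,C_2$, the neighbouring condition~\eqref{eq:nei1} reads $0 = \sign(\det(\nn_{12},p_i-p_j))\,\omega_{ij}\,d(p_i-p_j)$; since $p_i\neq p_j$ forces $d(p_i-p_j)\neq 0$, we get $\omega_{ij}=0$. The only edges not covered are those whose two sides lie in the same chamber; for these a separate short argument (summing the equilibrium relations over the vertices of the component cut off by the edge, so that only the single crossing term $\omega_{ij}(p_i-p_j)=0$ survives) shows $\omega_{ij}=0$ as well. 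Hence $\omega = 0$ and the map is injective.

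The substance of the theorem is surjectivity, which I would prove by reconstructing a self-stress from an arbitrary differential lifting $l$ and invoking uniqueness. Given $l$, define $\omega_{ij}$ along any sub-segment of $p_ip_j$ by solving~\eqref{eq:nei1} for the scalar coefficient; this is possible precisely because $l(C_2)-l(C_1)$ is parallel to $d(p_i-p_j)$. Two points must be checked. \emph{Well-definedness of $\omega_{ij}$}: when $p_ip_j$ is cut into sub-segments by crossings, the coefficient read off on neighbouring sub-segments agrees. This is the crossing-point computation in the proof of Theorem~\ref{thm:well} run in reverse: single-valuedness of $l$ around a crossing point $x$ (a valence-four point of the arrangement that is not a vertex of $G$) gives a relation of the form $(\omega^{(1)}_{ij}-\omega^{(2)}_{ij})\,d(p_i-p_j) + (\omega^{(1)}_{kl}-\omega^{(2)}_{kl})\,d(p_k-p_l)=0$, and linear independence of the transverse edge forms forces the coefficients to be constant across $x$. \emph{Equilibrium}: for each vertex $p_i$ traverse a small counterclockwise loop through the chambers $C_1,\ldots,C_m,C_1$ surrounding $p_i$; since $l$ is single-valued the accumulated differences vanish, and by the computation of Case~2 in the proof of Theorem~\ref{thm:well} this says exactly $d\!\left(\sum_j \omega_{ij}(p_i-p_j)\right)=0$, hence $\sum_j\omega_{ij}(p_i-p_j)=0$. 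Thus $\omega$ is a self-stress; by construction $l$ satisfies the neighbouring condition for this $\omega$ and vanishes on $C_\infty$, so by the uniqueness part of Theorem~\ref{thm:well} we conclude $l = l_{\omega,p}$ and $l$ lies in the image.

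The main obstacle is the surjectivity step, and within it the equilibrium verification: the conceptual heart is that the equilibrium condition at each vertex is not an extra hypothesis but is \emph{forced} by requiring $l$ to be a single-valued function of the chamber, via the same loop-around-a-vertex computation used in Theorem~\ref{thm:well}. The secondary technical nuisance is the bookkeeping for crossing edges—both the well-definedness of $\omega_{ij}$ along a subdivided edge and the same-chamber case in injectivity—which I would handle exactly as the crossing-point reduction already used in the proof of Theorem~\ref{thm:well}, refining $G(p)$ to $G(\tilde p)$ when convenient and using the scaling relation $\|p_k-p_l\| = \lambda_{kl}\|p_i-p_j\|$ to translate between the coefficients on sub-segments and on the original edge.
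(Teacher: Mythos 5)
Your proof is correct, but on surjectivity it takes a genuinely different --- and more substantive --- route than the paper. The paper's proof is three sentences: both sides are vector spaces, the map is linear, surjectivity holds ``since $l_{\omega,p}$ is well defined by Theorem~\ref{thm:well}'', and injectivity holds because the zero lifting forces all stress coefficients to vanish via Equation~\eqref{eq:nei1}. Surjectivity can be dispatched in one line there because Definition~\ref{def:diflift} defines a differential lifting \emph{relative to a given self-stress} $\omega$; the space of differential liftings is thus by construction the union over all self-stresses of the (by Theorem~\ref{thm:well}, singleton) sets $\{l_{\omega,p}\}$, i.e.\ precisely the image of the map. You instead read the target space intrinsically --- chamber functions vanishing on $C_\infty$ whose jump across (a sub-segment of) an edge is parallel to $d(p_i{-}p_j)$ --- for which surjectivity is a real assertion, and you prove it by reconstructing the stress from $l$: the crossing-point relation gives well-definedness of $\omega_{ij}$ along a subdivided edge, and the telescoping sum around each vertex shows that the equilibrium condition is \emph{forced} by single-valuedness of $l$, after which Theorem~\ref{thm:well}'s uniqueness identifies $l$ with $l_{\omega,p}$. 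This proves a stronger statement than the paper's literal one. Your injectivity argument is likewise more careful than the paper's one-liner: Equation~\eqref{eq:nei1} only constrains edges whose two sides lie in distinct chambers, and your summation of the equilibrium relations over the component cut off by a bridge edge closes a case the paper passes over silently.

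Two details are worth making explicit if you write this up. First, in the reconstruction you must set $\omega_{ij}:=0$ by fiat on any edge none of whose sub-segments separates two distinct chambers (no instance of \eqref{eq:nei1} determines it); this is consistent with your vertex loops, since such an edge contributes a zero jump. Second, the claim that an edge whose two sides lie in one chamber is a bridge of $G$ deserves a word of justification: if the edge lay on a cycle of $G(p)$, the winding number of that cycle would differ by $\pm 1$ on the two sides of the edge, so those sides could not belong to the same chamber.
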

\begin{proof}
  First note that the space of self-stresses and the space of differential
  liftings are vector spaces.   
  The mapping is linear as well as surjective since $l_{\omega, p}$ is
  well defined by Theorem~\ref{thm:well}.
  
  Let two self-stresses have the same differential lifting. Then their
  difference is lifted to the ``zero lifting'', which, by
  Equation~\eqref{eq:nei1}, is possible only if the stress coefficients on all edges
  are zero.
\end{proof}

\section{Differential liftings in $\RR^n$}
\label{sec:n_dim}

Consider a framework $G(p)$ in $\RR^n$. For $n \geq 3$ the complement
$\RR^n \setminus G(p)$ does not decompose into disconnected chambers but
rather consists of one connected component. Consequently we replace the notion
of chambers by the notion of equivalence classes of oriented loops in the
complement of $G(p)$ to $\RR^n$. Here the base point of the loops can be
arbitrarily chosen to be any point of the complement.

Similarly to the differential liftings of Section~\ref{sec:2_dim} (which
are maps from $\pi_0(\RR^2 \setminus G(p))$ to $\Omega^1(\RR^2)$) we set
differential liftings in $\RR^n$ to be special maps of the fundamental
group $\pi_{n - 2}(\RR^n \setminus G(p))$ to $\Omega^1(\RR^n)$
(Definition~\ref{def:difflifting-n}). In our setting homotopic loops will
map to the same associated $1$-form. 

Given a vector $v = (v_1, \ldots, v_n)$, we identify it with the
differential $1$-form $dv = v_1\, dx_1 + \cdots + v_n\, dx_n$ via linear
combinations of coordinate differentials.

For a given framework, we consider maps from the fundamental group
$\pi_{n - 2}(\RR^n \setminus G(p))$ to the space of constant $1$-forms in
$\RR^n$. Once the map is fixed, we can say that the corresponding
$1$-forms are \emph{associated} with the homotopy classes of loops.

The number of how many times two disjoint manifolds are intertwined is
called \emph{linking number}. Let us recall its definition from the
original paper by Gauss~\cite[p.~605]{Gauss1877}.
In the literature, such as \cite[p.~53]{milnor1997topology}, one often
also finds another convention for the sign of this definition.

\begin{definition}
  \label{def:linking}
  Consider two disjoint oriented manifolds 
  $M_1, M_2 \subset \RR^{m_1 + m_2 + 1}$
  of respective dimensions $m_1$ and $m_2$. Then the \emph{linking map} is
  the map 
  \begin{equation*}
    \lambda : M_1 \times M_2 \longrightarrow S^{m_1 + m_2}
  \end{equation*}
  defined by $\lambda(x, y) = -(x - y)/\|x - y\|$. 
  If $M_1$ and $M_2$ are compact, oriented and without boundary, then the
  degree of $\lambda$ is called the \emph{linking number} $\lk(M_1, M_2)$.
\end{definition}

\begin{remark}
The orientation of the sphere $S^{m_1 + m_2}$ is important for the definition of the linking number. We fixed the orientation in such a way that 
at point
$(1,0,\ldots, 0) \in S^{m_1 + m_2}$ it coincides with the orientation of the basis $(e_2,\ldots,e_{m_1 + m_2+1})$.
In such settings the orientations of Hopf links in $\RR^3$
are as in Figure~\ref{hopf-Links}.
\end{remark}

\begin{figure}
    \centering
       \begin{overpic}[width=.5\textwidth]{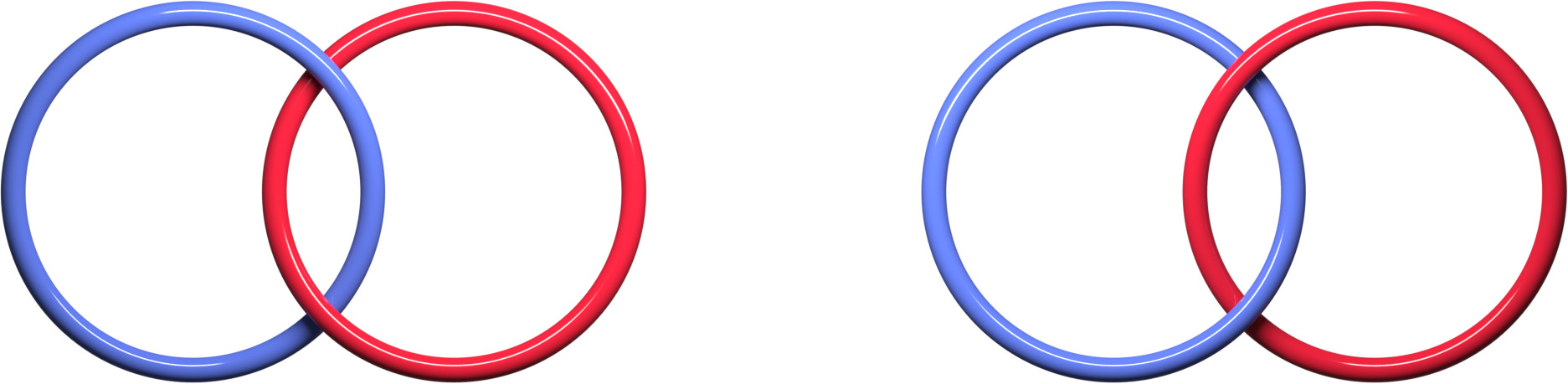}
         \put(-2,22){$M_1$}
         \put(38,22){$M_2$}
         \put(56,22){$M_1$}
         \put(96,22){$M_2$}
         \put(-3,7){%
           \begin{tikzpicture}
             \centerarc[myblue,line width=1pt,-latex](-0,0)(148:208:10mm);
           \end{tikzpicture}%
         }
         \put(42,7){%
           \begin{tikzpicture}
             \centerarc[myred,line width=1pt,-latex](0,0)(28:-28:10mm);
           \end{tikzpicture}%
         }
         \put(55,7){%
           \begin{tikzpicture}
             \centerarc[myblue,line width=1pt,-latex](-0,0)(148:208:10mm);
           \end{tikzpicture}%
         }
         \put(101,7){%
           \begin{tikzpicture}
             \centerarc[myred,line width=1pt,-latex](0,0)(28:-28:10mm);
           \end{tikzpicture}%
         }
       \end{overpic}
       \caption{A negatively (\emph{left}) and a positively (\emph{right})
       oriented Hopf link of two oriented circles in
       $\RR^2$.}\label{hopf-Links}
\end{figure}

\begin{remark}
  Below, one of the manifolds will be a piecewise linear finite polyhedral
  $m$-sphere. In this case we consider the linking number of the limit of
  approximations by differentiable spheres in $\varepsilon$
  neighbourhoods of the piecewise linear sphere in the continuous topology
  while $\varepsilon$ tends to zero.
\end{remark}

The linking number is an integer which is homotopy
invariant and can be computed with the so called
multi-dimensional Gauss linking integral (see,
e.g.,~\cite{shonkwiler2011higher}). 
Coming back to our frameworks $G(p)$, for a simple cycle $\delta$ in
$G(p)$ and $\Gamma\in \pi_{n - 2}(\RR^n \setminus G(p))$ we set
\begin{equation*}
  \lk(\Gamma, \delta) = \lk(\gamma, \delta)
\end{equation*}
with $\gamma$ being any element of $\Gamma$. The linking number of two
manifolds is well defined since it is invariant under homotopies and hence
it does not depend on the choice of $\gamma \in \Gamma$.

\begin{figure}[hb]
  \hfill
  \begin{overpic}[width=.47\textwidth]{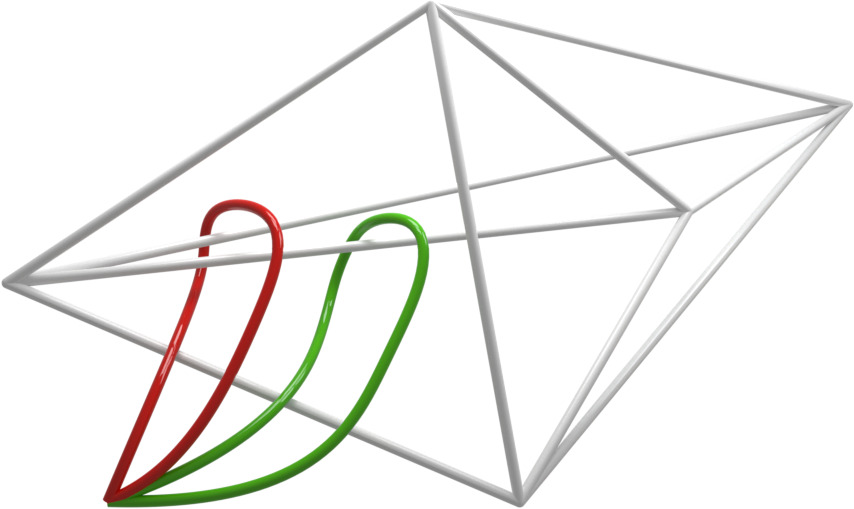}
    \put(-2,29){$p_i$}
    \put(97,50){$p_j$}
    \put(60,40){$e$}
    \put(24,22){$\gamma_1$}
    \put(41,22){$\gamma_2$}
  \end{overpic}
  \hfill{}
    \caption{Two loops $\gamma_1 \in \Gamma_1$ and $\gamma_2 \in \Gamma_2$
  from neighbouring homotopy classes $\Gamma_1$ and $\Gamma_2$ along the edge
  $e = p_i p_j$. There is a homotopy between $\gamma_1$ and $\gamma_2$
  which transversally crosses $e$ only once.}
  \label{fig:loops}
\end{figure}

We say that two homotopy classes $\Gamma_1, \Gamma_2 \in \pi_{n - 2}(\RR^n
\setminus G(p))$ are \emph{neighbours at an edge $e$} if there is a
homotopy between loops $\gamma_1 \in \Gamma_1$ and $\gamma_2 \in \Gamma_2$
in $\RR^n$ crossing the framework along $e$ transversally only once. See
Figure~\ref{fig:loops} for an illustration of loops from neighbouring
homotopy classes.
For each edge $e = p_i p_j$ there is a homotopy class which is a neighbour
to the neutral element of $\pi_{n - 2}(\RR^n \setminus G(p))$ along $e$
(see Figure~\ref{fig:fundamental-group}). We call it an \emph{elementary}
homotopy class. 

Consider a homotopy class $\Gamma \in \pi_{n - 2}(\RR^n \setminus G(p))$
and let $\Theta$ be an elementary homotopy class neighbouring the neutral 
element along $e = p_i p_j$. Then $\Gamma \pm \Theta$ is a neighbour of 
$\Gamma$ at $p_i p_j$.

Any homotopy class can be unwound by adding or subtracting elementary
loops to finally become the neutral element. Consequently, between any two
homotopy classes of $\pi_{n - 2}(\RR^n \setminus G(p))$ there is a
\emph{path} of neighbouring homotopy classes. Unwinding to the neutral
element and winding up to the target class would be such a path.

\begin{figure}[h]
  \hfill
  \begin{overpic}[width=.3\textwidth]{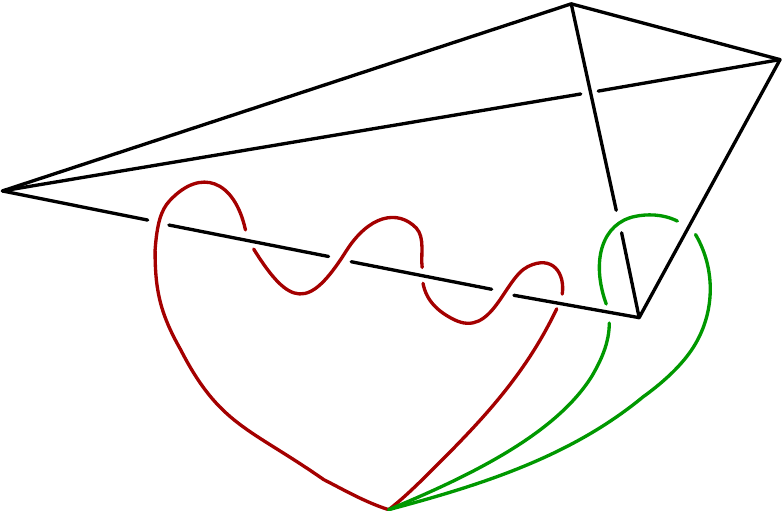}
    \put(25,19){$\gamma$}
    \put(84,11){$\theta$}
    \put(78,44){$e$}
  \end{overpic}
  \hfill
  \begin{overpic}[width=.3\textwidth]{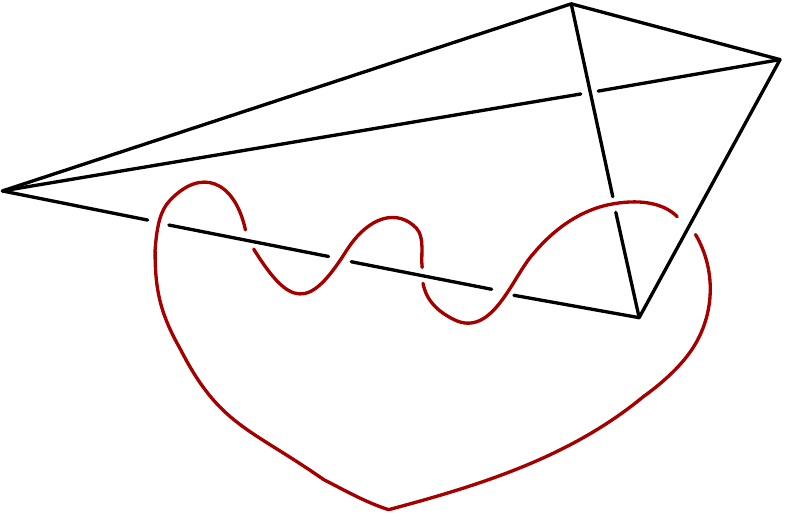}
    \put(25,19){$\gamma + \theta$}
    \put(78,44){$e$}
  \end{overpic}
  \hfill
  \begin{overpic}[width=.3\textwidth]{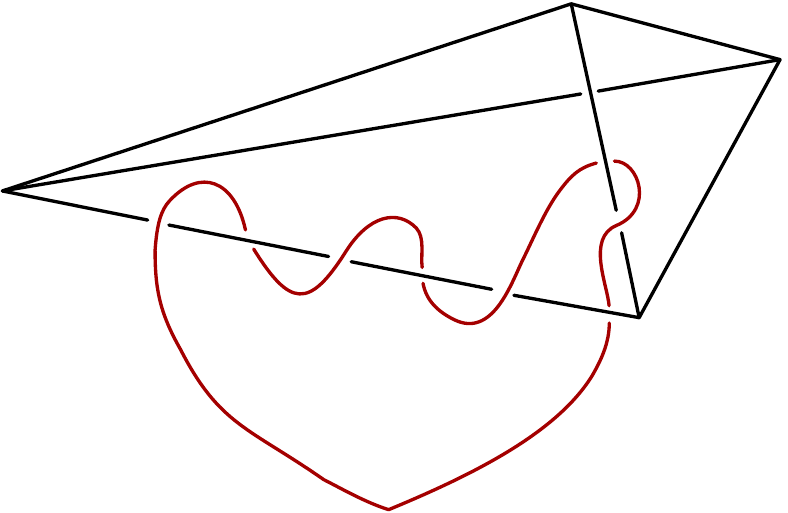}
    \put(25,19){$\gamma - \theta$}
    \put(77,46){$e$}
  \end{overpic}
  \hfill{}
  \caption{A framework $G(p)$ in $\RR^3$ and loops from $\pi_1(\RR^3
  \setminus G(P))$.
  \emph{Left:} The homotopy class $\Gamma$ is given by a representing loop
  $\gamma \in \Gamma$ and the elementary homotopy class $\Theta$ is given
  by an elementary loop $\theta \in \Theta$. The homotopy class $\theta$
  is a neighbour to the neutral element at edge $e$.
  \emph{Centre and Right:} The sums $\Gamma \pm \Theta$ are both neighbours
  to $\Gamma$ at edge $e$.
  }
  \label{fig:fundamental-group}
\end{figure}

\begin{definition}
  \label{def:nei2}
  Consider a framework  $G(p)$ with a self-stress 
  $\omega$. Let $\Gamma_1$ and $\Gamma_2$ be two different homotopy classes
  of loops that are neighbours at an edge $p_i p_j$  with  stress
  coefficient $\omega_{ij}$. Fix a simple cycle $\gamma$ in $G(p)$ which
  includes the edge $p_i p_j$ with orientation from $p_i$ to $p_j$. 
  Let $\alpha_1, \alpha_2 \in \Omega^1(\RR^n)$ be two constant $1$-forms
  associated with $\Gamma_1$ and $\Gamma_2$. We say that $\alpha_1$ and
  $\alpha_2$ satisfy the \emph{neighbouring condition} if  
  \begin{equation}
    \label{eq:nei2}
    \alpha_2 - \alpha_1
    = 
    \big(\lk(\Gamma_1, \gamma) - \lk(\Gamma_2, \gamma)\big) \cdot
    \omega_{ij} \cdot d(p_i{-}p_j).
  \end{equation}
\end{definition}

This definition is neither dependent on the choice of cycle $\gamma$ nor
on the orientation of the edge $p_i p_j$: the choice of the simple cycle
in Definition~\ref{def:nei2} does not affect Equation~\eqref{eq:nei2}
since $\big({\lk}(\gamma, \Gamma_1) - \lk(\gamma,\Gamma_2)\big)$ stays the
same due to the orientation choice. Furthermore, the choice of the
orientation of the edge $p_ip_j$ does not affect the right hand side of
Equation~\eqref{eq:nei2} since the linking number changes sign and 
$p_j - p_i = -(p_i - p_j)$. 

\begin{remark}
  Note that since  $\Gamma_1$ and $\Gamma_2$ are neighbours, we have
  \begin{equation*}
  \lk(\Gamma_1, \gamma) - \lk(\Gamma_2, \gamma) = \pm 1.
  \end{equation*}
\end{remark}

\begin{definition}
  \label{def:difflifting-n}
  For the framework $G(p)$ and a self-stress $\omega$ of $G(p)$, we call a
  mapping 
  \begin{equation*}
    l_{\omega, p} : 
    \pi_{n - 2}(\RR^n \setminus G(p)) \longrightarrow \Omega^1(\RR^n)
  \end{equation*}  
  a \emph{differential lifting}, if $l_{\omega, p}(\Gamma)$ is a constant
  $1$-form for all $\Gamma \in \pi_{n - 2}(\RR^n \setminus G(p))$, if it
  satisfies the \emph{neighbouring condition} for each neighbouring homotopy
  classes, and if it takes zero on the neutral element $\Gamma_\infty \in
  \pi_{n - 2}(\RR^2 \setminus G(p))$.
\end{definition}

The following theorem is the higher dimensional analogue to
Theorem~\ref{thm:well} of the $2$-dimensional case.

\begin{theorem}
  \label{thm:well2}
  For every framework $G(p)$ with a self-stress $\omega$, there exists a
  unique differential lifting $l_{\omega, p}$.
\end{theorem}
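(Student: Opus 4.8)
The plan is to mirror the proof of Theorem~\ref{thm:well}. As in the planar case, the value on the neutral element $\Gamma_\infty$ is fixed to zero, and the value on any other class $\Gamma$ is obtained by starting at $\Gamma_\infty$ and accumulating the increments prescribed by the neighbouring condition~\eqref{eq:nei2} along a path of neighbouring homotopy classes; such a path exists by the discussion preceding Definition~\ref{def:difflifting-n}. This construction simultaneously yields existence and, since the accumulated value is forced at every step once the value at $\Gamma_\infty$ is chosen, uniqueness, provided the accumulated increment does not depend on the chosen path. Thus the whole theorem reduces to the single assertion that the total increment around any closed path of neighbouring homotopy classes vanishes.

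Just as in Cases~1 and~2 of Theorem~\ref{thm:well}, there are two basic mechanisms. Reversing a single step replaces the integer $\lk(\Gamma_1,\gamma)-\lk(\Gamma_2,\gamma)=\pm1$ by its negative, so a back-and-forth path across one edge contributes zero, exactly as in Case~1. The analogue of Case~2 is the closed path that winds once around a vertex $p_i$, realising the relation satisfied by the elementary classes $\Theta_{ij}$ of the edges incident to $p_i$, whose meridians compose to the neutral element around the link sphere of $p_i$. Along this path each incident edge is crossed once with a consistent sign, so by~\eqref{eq:nei2} the increments sum to $\pm d\big(\sum_j \omega_{ij}(p_i-p_j)\big)$, which vanishes by the equilibrium condition at $p_i$.

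For a general closed path $\Gamma_{(0)},\ldots,\Gamma_{(m)}=\Gamma_{(0)}$ I would argue globally rather than decompose into the elementary pieces above. Choosing for each step a homotopy of $(n-2)$-spheres that crosses the corresponding edge transversally once and concatenating them produces a map $\bar H\colon S^{n-2}\times S^1\to\RR^n$, that is, an oriented closed $(n-1)$-cycle $Z$. After a generic perturbation $Z$ is transverse to $G(p)$, meets it only in interiors of edges, and misses all vertices. For an edge $e=p_ip_j$ the net number $N_e$ of crossings of the path with $e$ is the signed intersection number of $Z$ with the segment $p_ip_j$, which equals, up to one global sign, the jump $w_j-w_i$ of the integer winding number $w(Z,\cdot)$ of $Z$ about the two endpoints. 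Hence the total increment equals $\sum_e N_e\,\omega_{ij}\,d(p_i-p_j)=\pm\, d\big(\sum_{\{i,j\}}(w_j-w_i)\omega_{ij}(p_i-p_j)\big)$, and reindexing this bilinear expression turns it into $\mp\, d\big(\sum_i w_i\sum_j\omega_{ij}(p_i-p_j)\big)=0$, again by equilibrium. This shows that closed paths add zero and completes the proof; note that it subsumes the two elementary cases above, which merely isolate the two mechanisms.

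The main obstacle is the topological bookkeeping of the last paragraph: verifying that the concatenated homotopies really assemble into a closed cycle, achieving transversality to $G(p)$ and to its vertices simultaneously, and, most delicately, checking that the sign $\sign(\lk(\Gamma_1,\gamma)-\lk(\Gamma_2,\gamma))$ appearing in~\eqref{eq:nei2} coincides with the geometric crossing sign, so that $N_e$ is genuinely the intersection number and the identity $N_e=\pm(w_j-w_i)$ holds with a single global sign. A secondary point to address is that for $n=3$ the group $\pi_1(\RR^n\setminus G(p))$ may be non-abelian; since every increment depends only on linking numbers, which are conjugation invariant, the construction nevertheless descends consistently, and homotopic loops receive the same $1$-form, as required by Definition~\ref{def:difflifting-n}.
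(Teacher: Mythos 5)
Your proposal is correct, and its first two paragraphs follow the paper's own proof exactly: fix the value zero on the neutral element, propagate values via the neighbouring condition~\eqref{eq:nei2} along paths of neighbouring classes, and reduce existence and uniqueness to the vanishing of the total increment (the paper's ``monodromy'') around closed paths; your back-and-forth and around-a-vertex cases are precisely the paper's Cases~1 and~2. Where you genuinely diverge is the general case. The paper settles it in one sentence, asserting that any closed path ``can be decomposed into the elementary closed paths considered in Cases 1 and 2'' --- a nontrivial claim about how relations among neighbouring classes in $\pi_{n-2}(\RR^n\setminus G(p))$ are generated, which the paper does not justify. You instead argue globally: assemble the step homotopies into a closed $(n{-}1)$-cycle $Z$ (inserting connecting homotopies inside chambers, which exist because consecutive endpoints represent the same class and which contribute no crossings), identify the net coefficient on each edge with the signed intersection number $N_e=\pm(w_j-w_i)$ via the jump of the winding number of $Z$, and kill the total increment by summation by parts, $\sum_{\{i,j\}}(w_j-w_i)\,\omega_{ij}(p_i-p_j)=-\sum_i w_i\sum_j\omega_{ij}(p_i-p_j)=0$, using equilibrium at every vertex (this algebra checks out). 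Your route buys a proof of exactly the step the paper leaves as an assertion, at the cost of the topological bookkeeping you yourself flag: closing up $Z$, achieving transversality (harmless, since a generic perturbation changes each $N_e$ only by cancelling pairs), and the universal sign identifying the $\lk$-difference in~\eqref{eq:nei2} with the geometric crossing sign --- routine once orientation conventions are fixed, because the crossing in each step meets the chosen cycle $\gamma$ only in the edge being crossed. Your final remark is also a genuine improvement in care over the paper: for $n=3$ the group $\pi_1(\RR^3\setminus G(p))$ may be non-abelian and the paper's additive notation glosses over this, whereas your observation that all increments depend only on linking numbers, which are homological and hence conjugation-invariant, is what actually makes the construction well defined there.
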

\begin{proof}
For the simplicity of the exposition, we call the elements of $\pi_{n-2}(\RR^n\setminus G(p))$ chambers, similarly to the case $n=2$ considered in the previous section.

\noindent \textbf{Existence:}    Let us say that a \emph{path} between two chambers is a sequence of chambers that are neighbouring. The path is \emph{closed} if the starting chamber is the same as the end chamber.

The \emph{monodromy} of $\Omega^1(\RR^n)$ associated to a closed path
is the linear mapping defined by the summation of all forms on the right hand side of Equation~\eqref{eq:nei2}.
    
Let us show that all monodromies are zero.

\emph{Case 1}: Let us consider monodromies for the closed path $C_1,C_2,C_1$. They correspond to passing through an edge and going back, which will result in \begin{equation*}
     \alpha\mapsto \alpha+    
     \big(\lk(\Gamma_1,\gamma)-\lk(\Gamma_2,\gamma)\big)\omega_{ij} \cdot d(p_i{-}p_j)+ \big(\lk(\Gamma_2,\gamma)-\lk(\Gamma_1,\gamma)\big)\omega_{ij} \cdot d(p_i{-}p_j)=\alpha.
     \end{equation*}

\emph{Case 2}: Here we need to extend the definition of a closed path around a point $p_i$. Consider a small sphere $S^{n-1}$ centered at $p_i$ and let us fix the marked point $O_{n-1}$ for the homotopy group so that it lies in $S^{n-1}\setminus G(p)$. Let us consider a differential homotopy $H:[0,1]\times S^{n-2}\to S^{n-1}$ such that 
\begin{itemize}
    \item $H(\{0\}\times S^{n-2})=\{O_{n-1}\}$ and $H(\{1\}\times S^{n-2})=\{O_{n-1}\}$;
    \item $H([0,1]\times \{O_{n-2}\})=\{O_{n-1}\}$;
    \item $H:(0,1)\times S^{n-2}\setminus \{O_{n-2}\}\to S^{n-1}\setminus \{O_{n-1}\}$ is a differentiable, non-singular bijection with positive Jacobian;
    \item The pre-images of points  $G(p)\cap S^{n-1}$ have  pairwise different first coordinates.
\end{itemize}
This homotopy induces the  closed path between the chambers starting and ending in the identity chamber. We say that this path has a \emph{counterclockwise} direction. If we reverse the first coordinate, then we have \emph{clockwise} direction.

Let us consider a closed path around a point $p_i$ in clockwise direction, $C_1,\ldots, C_s,C_1$. Then the monodromy is
\begin{equation}
  \label{eq:differentials}
  \alpha\mapsto \alpha + \sum_{i=1}^s\big(\lk(\Gamma_i,\gamma)-\lk(\Gamma_{i+1},\gamma)\big)\omega_{ij} \cdot d(p_i{-}p_j)=
  \alpha + d\Big(\sum_{i=1}^s\omega_{i,j} (p_i{-}p_j)\Big)=
  \alpha.
\end{equation}
The first equality is true by the linearity of the differentials and since the differences between all linking numbers are  $+1$. The second equality holds by the equilibrium condition at $p_i$.

\emph{General case}:  The monodromy for any closed path is zero since it can be decomposed into the elementary closed paths considered in Cases 1 and 2.

Therefore, if we have two paths $\mathcal{C}_1$ and $\mathcal{C}_2$ with the same starting chamber $C_s$ and the same end chamber $C_e$, then the corresponding paths through neighbours will lead to the same differential form, since the closed path $\mathcal{C}_2$ followed by the reversed path $\mathcal{C}_1$ has zero monodromy.

\noindent \textbf{Uniqueness:} By definition, the form corresponding to the unbounded element  of $\pi_0(\RR^n\setminus G(p))$ (denoted by $C_\infty$) is zero. Then each path from $C_\infty$ to $C$ uniquely determines the form for $C$ by Equation~\eqref{eq:nei1}. 
\end{proof}

\begin{remark}
  Let $\Gamma = \Gamma_1 + \Gamma_2$. Then 
  \begin{equation*}
  l_{\omega, p}(\Gamma) = l_{\omega, p}(\Gamma_1) + l_{\omega, p}(\Gamma_2).
  \end{equation*} 
  Therefore, once we know the values of the differential liftings for a
  set of generators of $\pi_{n - 2}(\RR^n \setminus G(p))$, we get all the
  values for $l_{\omega, p}$ by the above additivity.
\end{remark}

\begin{theorem}
  \label{thm:bij2}
  Let $G$ be any graph and $G(p)$ be a framework in $\RR^n$.  The mapping
  $l_{\omega, p}$ is a bijection between the space of self-stresses and
  the space of differential liftings.
\end{theorem}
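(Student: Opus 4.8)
The plan is to mirror the structure of the proof of Theorem~\ref{thm:bij}, which handled the $n=2$ case, since the two statements are formally identical once ``chambers'' are reinterpreted as elements of $\pi_{n-2}(\RR^n\setminus G(p))$. First I would observe that both the space of self-stresses and the space of differential liftings are vector spaces: self-stresses form a linear subspace of $\RR^E$ cut out by the equilibrium equations, and differential liftings form a vector space under pointwise addition of the associated $1$-forms, with the neighbouring condition~\eqref{eq:nei2} being linear in $\omega$ (the right-hand side depends linearly on $\omega_{ij}$ and hence on $\omega$). The map $(G(p),\omega)\mapsto l_{\omega,p}$ is therefore a candidate linear map, and I would record explicitly that $l_{\omega^1,p}+l_{\omega^2,p}=l_{\omega^1+\omega^2,p}$ and $\lambda\, l_{\omega,p}=l_{\lambda\omega,p}$, exactly as was done before Theorem~\ref{thm:well}.

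The next step is \emph{well-definedness and surjectivity}, both of which follow immediately from Theorem~\ref{thm:well2}: that theorem guarantees that for every self-stress $\omega$ there exists a unique differential lifting $l_{\omega,p}$, so the map is a genuine function landing in the target space, and every differential lifting arises this way by construction. This is where the real work has already been offloaded, so here I would simply cite Theorem~\ref{thm:well2} for existence/uniqueness and note that the assignment is linear by the additivity of~\eqref{eq:nei2}.

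The heart of the argument is \emph{injectivity}. By linearity it suffices to show that the only self-stress mapping to the zero lifting is $\omega=0$. So suppose $l_{\omega,p}\equiv 0$, meaning every homotopy class is assigned the zero $1$-form. For any edge $e=p_ip_j$ with stress coefficient $\omega_{ij}$, pick two homotopy classes $\Gamma_1,\Gamma_2$ that are neighbours at $e$ (for instance the neutral element and its elementary neighbour $\Theta$ along $e$); then the neighbouring condition~\eqref{eq:nei2} reads
\begin{equation*}
  0 = \alpha_2-\alpha_1 = \big(\lk(\Gamma_1,\gamma)-\lk(\Gamma_2,\gamma)\big)\cdot\omega_{ij}\cdot d(p_i-p_j).
\end{equation*}
By the Remark following Definition~\ref{def:nei2} the linking-number difference is $\pm1$, and $d(p_i-p_j)$ is a nonzero $1$-form because $p_i\neq p_j$ (the framework map $p$ is injective). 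Hence $\omega_{ij}=0$. Since $e$ was arbitrary, $\omega=0$, which is the injectivity we wanted.

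The main obstacle I anticipate is purely in making the injectivity step airtight rather than conceptually hard: one must be certain that for \emph{every} edge $e$ there genuinely exist two homotopy classes that are neighbours at $e$ and that $e$ can be realised as the sole transverse crossing of a homotopy. This is precisely the content of the ``elementary homotopy class'' discussion preceding Definition~\ref{def:nei2} (each edge admits an elementary class $\Theta$ neighbouring the neutral element), so it can be invoked directly. A secondary subtlety is that the linking difference $\pm1$ and the nonvanishing of $d(p_i-p_j)$ together force $\omega_{ij}=0$ \emph{per edge}; I would emphasise that this local-to-global step needs the neighbouring condition only along single edges, so no global consistency argument beyond Theorem~\ref{thm:well2} is required. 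With these pieces the proof is a two-line specialisation of the $n=2$ case, and I would keep it correspondingly brief.
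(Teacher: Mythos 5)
Your proposal is correct and follows essentially the same route as the paper: the paper's proof of Theorem~\ref{thm:bij2} simply declares it identical to that of Theorem~\ref{thm:bij}, namely linearity and surjectivity from the well-definedness guaranteed by Theorem~\ref{thm:well2}, and injectivity because a self-stress lifting to the zero lifting must, by the neighbouring condition~\eqref{eq:nei2}, have all stress coefficients equal to zero. Your write-up just makes explicit the details (elementary homotopy classes, the $\pm1$ linking-number difference, $d(p_i{-}p_j)\neq 0$) that the paper leaves implicit.
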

\begin{proof}
  The proof is the same as the proof of Theorem~\ref{thm:bij}.
\end{proof}

\begin{remark}
Note that geometric realisability of self-stresses was studied in~\cite{karpenkov+2021}.
\end{remark}

\begin{example}
Consider a framework with graph $K_5$ in 3-space with the following vertices:
$$
p_1=(0,0,0), \quad p_2=(1,0,0), \quad p_3=(0,1,0), \quad p_4=(0,0,1),
\quad p_5=(1,1,1).
$$
This framework has a one-dimensional space of self-stresses. Let $\omega$ be one of these self-stresses defined by the matrix:
$$
\omega=
\begin{bmatrix}
\mm 0 & \mm 2 & \mm 2 & \mm 2 &    -2\\
\mm 2 & \mm 0 &    -1 &    -1 & \mm 1\\
\mm 2 &    -1 & \mm 0 &    -1 & \mm 1\\
\mm 2 &    -1 &    -1 & \mm 0 & \mm 1\\
   -2 & \mm 1 & \mm 1 & \mm 1 & \mm 0\\
\end{bmatrix},
$$
where the stress coefficient $\omega_{i,j}$ is in position $(i,j)$.
Then the neighbouring condition along the edge $p_{i}p_{j}$ 
is 
$$
\alpha_2-\alpha_1=\big(\lk(\Gamma_1, \gamma) - \lk(\Gamma_2, \gamma)\big) \cdot\beta_{i,j}.
$$
The collection of $1$-forms $\beta_{i,j}$ is presented by the following matrix
(we consider $(x,y,z)$ as coordinates of 3-space):
$$
\beta=
\begin{bmatrix}
0 & -2dx & -2dy & -2dz &2dx+2dy+2dz\\
2dx & 0 &-dx+dy &-dx+dz & -dy-dz\\
2dy & dx-dy & 0 &-dy+dz & -dx-dz\\
2dz &dx-dz &dy-dz & 0 & -dx-dy\\
-2dx-2dy-2dz& dy+dz & dx+dz & dx+dy & 0\\
\end{bmatrix}.
$$
Consider the loops in Figure~\ref{K-5-loops}.
The homotopy class $\Gamma_1$ represented by $\gamma_1$ 
is a neighbour to the homotopy class of trivial loops at the edge $p_1p_2$, and hence
$$
l_{\omega, p}(\Gamma_1) = -2dx. 
$$
In order to get from the homotopy class of trivial loops to the 
homotopy class $\Gamma_2$ one may cross first the edge $p_2p_4$ 
and then the edge $p_2p_5$. So we have:
$$
l_{\omega, p}(\Gamma_2) = (-dx+dz)+(-dy-dz)=-dx-dy. 
$$
\begin{figure}
  \hfill
  \begin{overpic}[width=.3\textwidth]{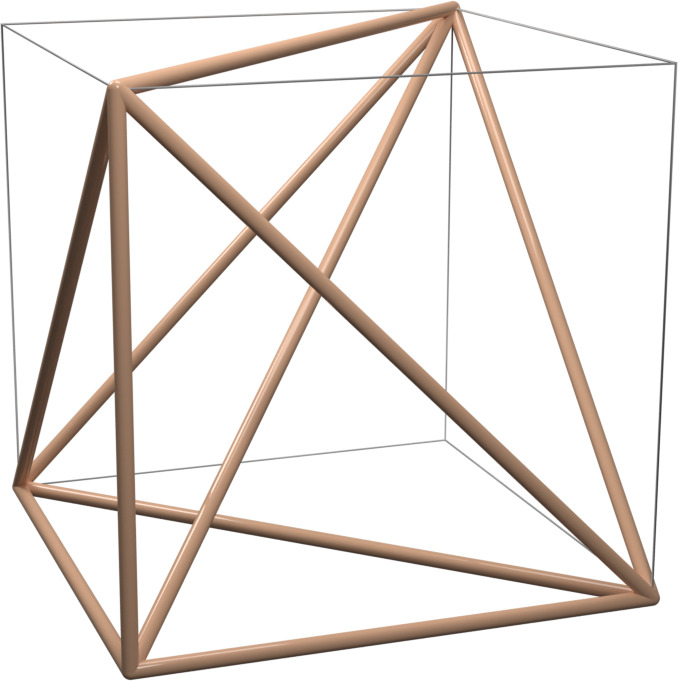}
    \put(10,0){$p_1$}
    \put(93,5){$p_2$}
    \put(-4,22){$p_3$}
    \put(10,90){\contour{white}{$p_4$}}
    \put(70,97){\contour{white}{$p_5$}}
  \end{overpic}
  \hfill
  \begin{overpic}[width=.3\textwidth]{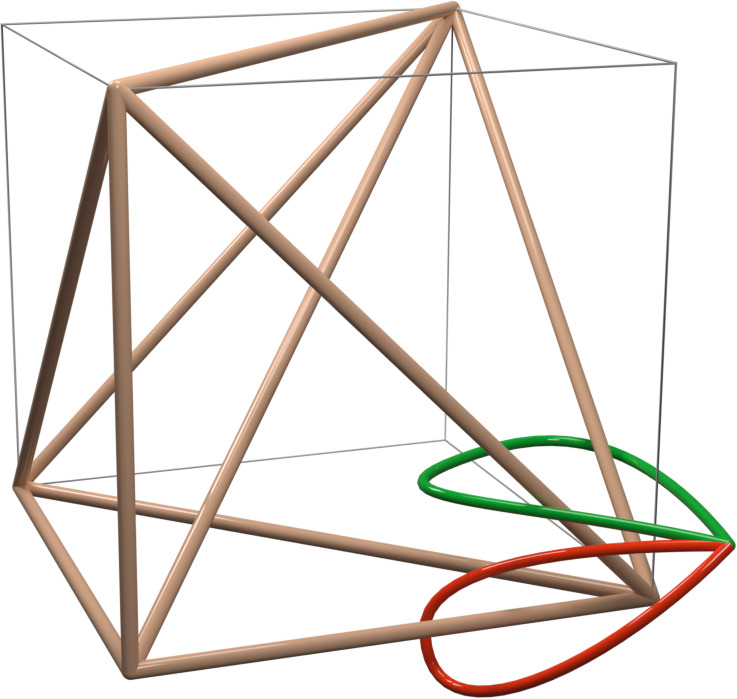}
    \put(10,0){$p_1$}
    \put(90,8){$p_2$}
    \put(-4,22){$p_3$}
    \put(10,88){\contour{white}{$p_4$}}
    \put(66,93){\contour{white}{$p_5$}}
    \put(51,0){$\gamma_1$}
    \put(93,30){$\gamma_2$}
  \end{overpic}
  \hfill{}
  \caption{Loops linked with $K_5$.}
  \label{K-5-loops}
\end{figure}

\end{example}

\begin{remark}
  Definition~\ref{def:nei2} coincides with Definition~\ref{def:nei1} in
  the case when $n = 2$. Here we consider $\pi_0$, which coincides with
  connected components.  
  In this case, $\Gamma_1$ and $\Gamma_2$ are represented by single points
  $x_1$ and $x_2$ in the corresponding chambers. Then we have
  \begin{equation*}
    \lk(x_1, \gamma) - \lk(x_2, \gamma)
    = 
    \wind(\gamma, x_1) - \wind(\gamma, x_2)
    =
    \sign(\det(\nn_{12}, p_i - p_j)),
  \end{equation*}
  where $\nn_{12}$ is the normal of the vector $p_i - p_j$ in the
  direction from the first to the second chamber and where `$\wind$' denotes
  the winding number.
  The first equation holds because the linking numbers coincide with the
  winding numbers. The second equation is a general property of the
  winding number.
\end{remark}

\section{Differential liftings of polytopal embeddings of complexes}
\label{sec:polytopal_embedding}

In this section we will generalise differential liftings to higher
dimensional frameworks. We will investigate structures which are
polyhedral CW complexes where all faces in which forces appear are of the
same dimension. We introduce $m$-frameworks and their equilibrium
stresses.

\subsection{The $m$-frameworks}
\label{subsec:k_frameworks}

Let us recall a general definition of $m$-frameworks
from~\cite{karpenkov+2022}. Our model for self-stressable $m$-frameworks
in this section will be based on the following structure. The term
\emph{plane} will refer to an affine subspace in $\RR^n$. In the
following let $n > m \geq 1$.

\begin{definition}
  An \emph{$m$-framework} $\mathcal{F} = (E, F, I, \mathbf{n})$ consists of
  $E$, a collection of $(m{-}1)$-dimensional planes in $\RR^n$; $F$, a
  collection of $m$-dimensional planes in $\RR^n$; a subset $I \subset
  \{(e, f) \in E \times F \mid e \subset f\}$; a function $\mathbf{n}$
  assigning to each pair $(e, f)\in I$, a unit vector $\mathbf{n}(e, f)$
  which is contained in $f$ and which is orthogonal to $e$. We call planes
  from $F$ \emph{faces} and planes from $E$ \emph{edges}.\end{definition} 
 
 \begin{definition}
 Let $\mathcal{F} = (E, F, I, \mathbf{n})$ be an $m$-framework. A 
\emph{force-load} $\omega$ on $\mathcal{F}$ is any function
$\omega : F \to \RR$. A framework $\mathcal{F}$ together with a force-load
$\omega$ is said to be in \emph{equilibrium} if for every $e \in E$ we have
\begin{equation*}
  \sum_{(e, f) \in I} \omega(f)\mathbf{n}(e, f) = 0.
\end{equation*}
Such a force-load is called an \emph{equilibrium force-load} for
$\mathcal{F}$.
\end{definition}

Note that there is a difference between force-loads and stresses for
graphs. The latter is related the length of the edges whereas force-loads
are not. For now stresses are not defined as faces of $m$-frameworks are
planes and do not have specified shapes or magnitudes. After introducing
polytopal complexes below we could consider stresses as well. 

\begin{definition}
  An $m$-framework is said to be \emph{self-stressable} or a
  \emph{tensegrity} if there exists a non-zero
  equilibrium force-load on it.
\end{definition}

\subsection{Polytopal embeddings of complexes}

An \emph{$m$-dimensional polytope} is the convex hull of a finite set of
points spanning $\RR^m$. The boundary is piecewise linear so we have a
natural structure of faces. The faces of dimension $m{-}1$ are called
\emph{facets}.

\begin{definition}
Consider a finite collection $F$ of $m$-dimensional polytopes
in $\RR^n$
whose intersection is either a facet or a $q$-dimensional
polytope with $q \leq m - 2$. 
Let $E$ be the collection of all facets of the polytopes in $F$ (here we identify the same facets of distinct polytopes). 
We also require that the planes of any two facets in $E$ are distinct.
Then the pair $\mathcal{C} = (F, E)$ is called   an \emph{$m$-dimensional polytopal complex} in
$\RR^n$. \end{definition}

\begin{definition}
  The $m$-framework $\mathcal{F}_C = (E_C, F_C, I_C, \mathbf{n})$
  \emph{associated} with an $m$-dimensional polytopal complex $C$ is
  defined by 
  \begin{itemize}
    \item $E_C$, the collection of $(m{-}1)$-dimensional planes spanned
      by facets of $E$;
    \item $F_C$, the collection of $m$-dimensional planes spanned by
      polytopes of $F$;
    \item $I_C = \{(e, f) \in E_C \times F_C \mid \text{the facet}\ e\
      \text{is a facet of polytope}\ f\}$; 
    \item a function $\mathbf{n}$ assigning to each pair $(e, f) \in I_C$
      a unit vector $\mathbf{n}(e, f)$ which is contained in $f$, is
      orthogonal to $e$, and points to the interior of the polytope.    
  \end{itemize}
\end{definition}

\begin{example}
  Let us consider a very broad family of examples arising from parallel
  meshes in $\RR^3$. We say that two polyhedral surfaces $P_1$ and
  $P_2$ are \emph{parallel} if $P_1$ and $P_2$ have the same combinatorics 
  and such that corresponding edgwes are parallel (consequently also
  corresponding faces are parallel). Let us connect the corresponding
  parallel edges of such a pair of meshes.
  Then we get trivalent $2$-dimensional polytopal complexes (see an
  example in Figure~\ref{fig-parallel}). For further details on parallel
  surfaces and related discrete differential geometry we refer
  to~\cite{bobenko-pottmann-wallner} and~\cite{karpenkovWallner-2014}.
\end{example}

\begin{figure}
  \begin{overpic}[width=.5\textwidth]{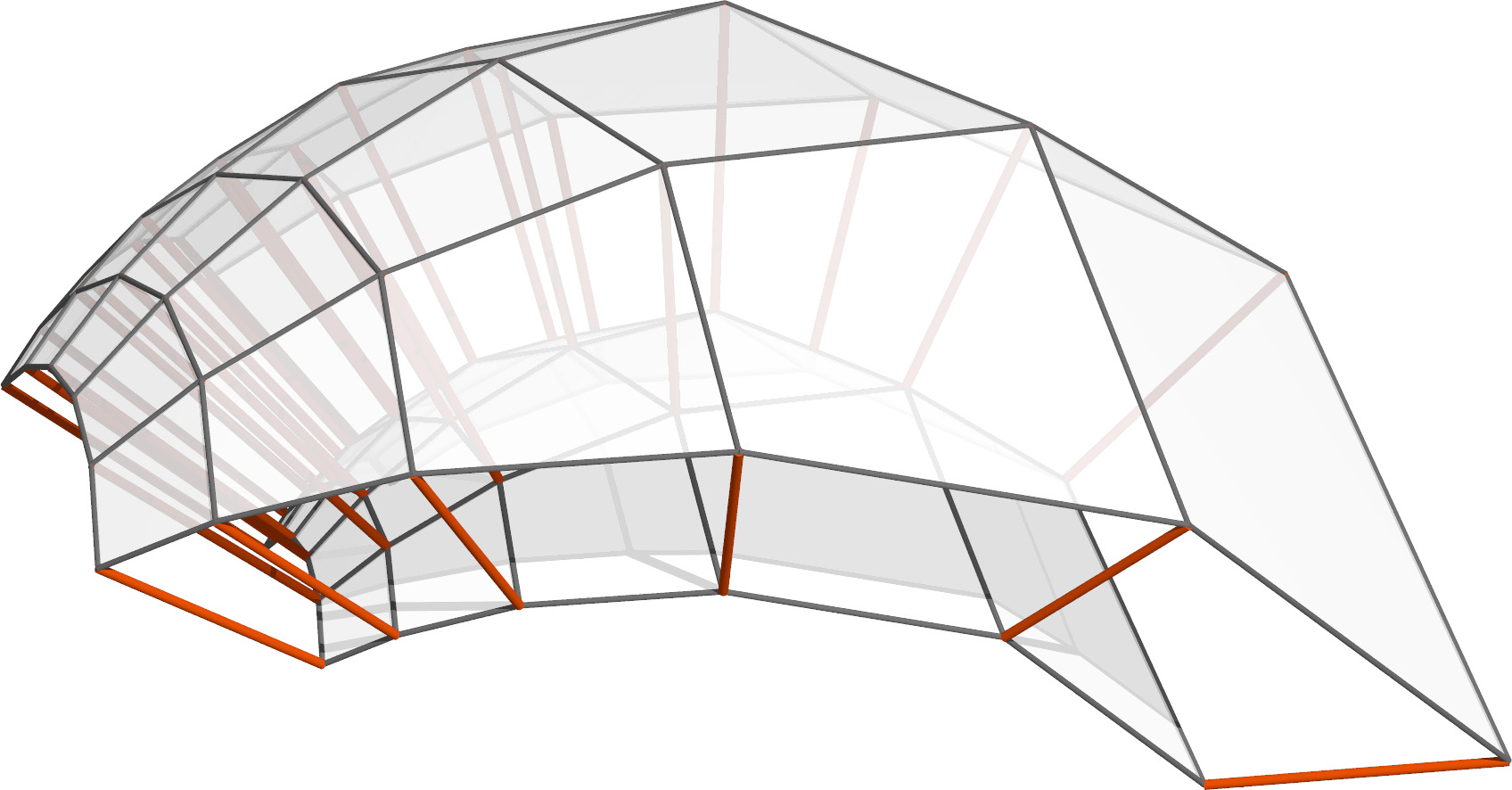}
  \end{overpic}
\caption{An example of a pair of parallel meshes.}\label{fig-parallel}
\end{figure}

\subsection{Differential liftings of $m$-frameworks}

Consider an $m$-dimensional polytopal complex $C$.  While for graphs we
worked with $1$-forms, we now work with $m$-forms. As before, we will
study fundamental groups $\pi_{n - m - 1}(\RR^n \setminus C)$.
Again we can choose the base point to be any point of the complement. Note
that homotopic $(n{-}m{-}1)$-spheres will be associated with the same $m$-form. 

For an oriented $m$-dimensional polytope $f$ we set the differential form
\begin{equation*}
  \alpha_f = de_1 \wedge \ldots \wedge de_m,
\end{equation*} 
where $e_1, \ldots, e_m$ is an orthonormal basis of the span of $f$ with
positive orientation.

For a given $m$-dimensional polytopal complex $C$, we consider maps from
the fundamental group $\pi_{n - m - 1}(\RR^n \setminus C)$ to the space of
constant $m$-forms in $\RR^n$. Once the map is fixed, we say that the
corresponding $m$-forms are \emph{associated} with the homotopy classes of
spheres.

We say that two homotopy classes of spheres in $\RR^n \setminus C$ are
\emph{neighbours at a facet} $e$ if there is a homotopy of spheres in $\RR^n$
crossing the framework along $e$ only once (transversally).

An \emph{$m$-cycle} in $C$ is an oriented sub-complex of $C$ homeomorphic to the unit
$m$-sphere.

\begin{definition}
  \label{def:nei3}
  Consider an $m$-dimensional polytopal complex $C$ with a non-zero equilibrium force-load
  $\omega$ on the associated $m$-framework $\mathcal{F}_C$. Let $\Gamma_1$
  and $\Gamma_2$ be two different homotopy classes of spheres that are
  neighbours at an oriented facet $f$ with stress coefficient
  $\omega_{f}$. Fix a simple $m$-cycle $\gamma$ in $C$ which includes the
  facet $f$ with orientation of $f$. 
  Let $\alpha_1$ and $\alpha_2$ be two constant $m$-forms associated with
  $\Gamma_1$ and $\Gamma_2$. We say that $\alpha_1$ and $\alpha_2$ satisfy
  the \emph{neighbouring condition} if  
  \begin{equation}
    \label{eq:nei3}
    \alpha_2 - \alpha_1
    = 
    (\lk(\Gamma_1, \gamma) - \lk(\Gamma_2, \gamma)) \cdot \omega_{f} 
    \cdot \alpha_f.
  \end{equation}
\end{definition}

\begin{remark}
  Note that since $\Gamma_1$ and $\Gamma_2$ are neighbours, we have
  \begin{equation*}
  \lk(\Gamma_1, \gamma) - \lk(\Gamma_2, \gamma) = \pm 1.
  \end{equation*}
\end{remark}

\begin{remark}
   The choice of the simple $m$-cycle in Definition~\ref{def:nei3} does
   not affect Equation~\eqref{eq:nei3} since
   $(\lk(\gamma, \Gamma_1) - \lk(\gamma, \Gamma_2))$ does not change due to the orientation choice.

   Note also that the choice of the orientation of the facet $f$ does not
   affect the right hand side of Equation~\eqref{eq:nei3} since the
   linking number and the form $\alpha_f$ simultaneously change their signs. 
\end{remark}

\begin{definition}
  \label{def:diflfting-polyhedra}
  Consider an $m$-dimensional polytopal complex $C$ with a non-zero force-load
  $\omega$ on the associated $m$-framework $\mathcal{F}_C$. The
  \emph{differential lifting} $l_{\omega, C}$ is the $m$-form which 
  \begin{itemize}
  \item
  is
  constant on the connected components of the complement of $C$ to $\RR^n$;
  \item satisfies the neighbouring condition for all neighbouring homotopy classes of spheres;
  \item 
  is zero on the homotopy class of spheres that includes the constant map.
\end{itemize}

  Denote by $l_{\omega, C}(\Gamma)$ the form corresponding to $\Gamma \in
  \pi_{n - m - 1}(\RR^n\setminus C)$.
\end{definition}

\begin{theorem}
  \label{thm:well3}
  For every $m$-framework $\mathcal{F}_C$ with a force-load $\omega$,
  there exists a unique differential lifting of $C$.
\end{theorem}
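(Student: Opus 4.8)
The plan is to mirror the proof strategy of Theorems~\ref{thm:well} and~\ref{thm:well2}, since the statement of Theorem~\ref{thm:well3} is the exact higher-dimensional analogue for $m$-frameworks. As before, I would separate the argument into an \emph{existence} part and a \emph{uniqueness} part. For uniqueness, the value on the trivial homotopy class is fixed to be zero by Definition~\ref{def:diflfting-polyhedra}, and any other class $\Gamma \in \pi_{n-m-1}(\RR^n \setminus C)$ is reachable from the trivial class by a path of neighbouring homotopy classes of spheres (every class can be unwound to the neutral element by adding and subtracting elementary spheres, exactly as in the $\RR^n$ case). The neighbouring condition~\eqref{eq:nei3} then determines the $m$-form on each class along such a path, so once we verify that the value is path-independent, uniqueness is immediate.

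The heart of the proof is therefore the existence statement, which, just as in Theorem~\ref{thm:well2}, reduces to showing that the \emph{monodromy} along any closed path of neighbouring homotopy classes of spheres is zero, i.e.\ that summing the right-hand sides of~\eqref{eq:nei3} around a closed loop adds the zero $m$-form. The plan is to decompose an arbitrary closed path into two types of elementary closed paths. \textbf{Case~1} is the trivial back-and-forth path $C_1, C_2, C_1$ that crosses a single facet $f$ and returns; here the two contributions are $(\lk(\Gamma_1,\gamma) - \lk(\Gamma_2,\gamma))\,\omega_f\,\alpha_f$ and its negative, which cancel. \textbf{Case~2} is the elementary loop encircling a single edge $e \in E_C$: one considers a small $(n-m)$-sphere linking $e$ and a homotopy sweeping an $(n-m-1)$-sphere once around it, analogous to the $H:[0,1]\times S^{n-2}\to S^{n-1}$ construction in the proof of Theorem~\ref{thm:well2} but now with spheres of dimension $n-m-1$ on a linking sphere $S^{n-m}$ centred at $e$. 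As the sphere sweeps around $e$, it crosses in turn each facet $f$ incident to $e$; the successive linking-number differences are all $+1$ (for a fixed orientation of the sweep), so the total monodromy is
\begin{equation*}
  \sum_{(e,f)\in I_C} \omega(f)\,\alpha_f
  \;=\;
  \Big(\sum_{(e,f)\in I_C}\omega(f)\,\mathbf{n}(e,f)\Big)\lrcorner\,\alpha_e,
\end{equation*}
which vanishes precisely by the equilibrium condition $\sum_{(e,f)\in I_C}\omega(f)\,\mathbf{n}(e,f)=0$ of the associated $m$-framework. General closed paths decompose into these elementary pieces, so all monodromies vanish and the lifting is well defined.

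The main obstacle, and the place where the argument genuinely differs from the graph case, is making \emph{Case~2} rigorous. In $\RR^2$ and in $\RR^n$ for graphs the ``elementary loop around a vertex'' is intuitively clear and the crossed edges are linearly ordered by angle; here one must set up the analogous sweep of an $(n-m-1)$-sphere around an $(m-1)$-plane $e$ and carefully track how each incident facet $f$ contributes. The key point is to identify the $m$-form $\alpha_f$ produced when the sphere transversally crosses $f$ with the normal direction $\mathbf{n}(e,f)$: writing $\alpha_f = \mathbf{n}(e,f)\lrcorner\,\alpha_e$ (contraction of the $(m{-}1)$-form $\alpha_e$ associated to $e$ with the inward unit normal), the sum over all incident facets factors through the equilibrium vector sum and hence vanishes. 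I would also need to check that the orientation conventions for the linking number (fixed in the Remark following Definition~\ref{def:linking}) are consistent so that all the linking-number differences around the sweep have the same sign $+1$; this is the bookkeeping analogue of the ``counterclockwise orientation'' observation in Theorem~\ref{thm:well2}. Once the equilibrium condition is seen to be exactly the vanishing of the Case~2 monodromy, the remaining steps are formally identical to the earlier proofs.
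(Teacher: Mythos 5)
Your proposal follows the paper's own proof almost step for step: existence is reduced to the vanishing of monodromies along closed paths of neighbouring homotopy classes, with the same two elementary cases (the back-and-forth crossing of a single face, and the sweep around a single $(m{-}1)$-dimensional facet $e$), the same decomposition of general closed paths into these pieces, and the same appeal to the equilibrium condition at $e$; uniqueness is the same path-from-the-trivial-class argument. The paper realises your ``sweep around $e$'' by restricting the $(n{-}m{-}1)$-spheres to the orthogonal complement of $e$, which is $(n{-}m{+}1)$-dimensional --- precisely your linking sphere $S^{n-m}$ centred at $e$.

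However, the formula you single out as the key point of Case~2 is wrong as stated. You set $\alpha_f$ equal to the contraction (interior product) of the $(m{-}1)$-form $\alpha_e$ with the vector $\nn(e,f)$. Contraction lowers the degree of a form by one, so that expression is an $(m{-}2)$-form, while $\alpha_f$ must be an $m$-form: the identity is dimensionally inconsistent. What you need --- and what the paper uses --- is the wedge product. Since the plane of $f$ is spanned by the plane of $e$ together with $\nn(e,f)$, one has $\alpha_f=\pm\,\alpha_e\wedge d\nn(e,f)$, where $d\nn(e,f)$ denotes the constant $1$-form dual to $\nn(e,f)$. With this replacement your argument goes through unchanged: the Case~2 monodromy becomes
\begin{equation*}
\alpha_e\wedge d\Big(\sum_{(e,f)\in I_C}\omega(f)\,\nn(e,f)\Big),
\end{equation*}
which vanishes by the equilibrium condition at $e$, since the map $v\mapsto\alpha_e\wedge dv$ is linear in $v$. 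This is exactly the factorisation through the equilibrium sum that you intended; only the algebraic operation implementing it had to be corrected.
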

\begin{proof}
  For the simplicity of the exposition, we call the elements of $\pi_{n-m-1}(\RR^n\setminus G(p))$ the \emph{chambers}, similarly to the case $m=1$ considered in the previous section.

\noindent \textbf{Existence:}    Let us say that a \emph{path} between two
  chambers is a sequence of chambers that are neighbouring. The path is \emph{closed} if the starting chamber is the same as the end chamber.

    The \emph{monodromy} of $\Omega^m(\RR^n)$ associated to a closed path
is the linear mapping defined by the summation of all forms that are second summands on the right hand side of Equation~\eqref{eq:nei3}.

Let us show that all monodromies are zero.

\emph{Case 1}: Let us consider monodromies for the closed path $\Gamma_1,\Gamma_2,\Gamma_1$. They correspond to passing through a polytope $f$ and going back, which will result in \begin{equation*}
     \alpha\mapsto \alpha+    
     \big(\lk(\Gamma_1,\gamma)-\lk(\Gamma_2,\gamma)\big)\omega_{ij} \cdot \alpha_f+ \big(\lk(\Gamma_2,\gamma)-\lk(\Gamma_1,\gamma)\big)\omega_{ij} \cdot \alpha_f=\alpha.
     \end{equation*}

\emph{Case 2}: Here we need to extend the definition of a closed path
  around a facet $e$ in clockwise direction. It is done similarly to the
  case $m=1$ by restricting our $(n{-}m{-}1)$-spheres to the orthogonal
  complement to $e$, which is $(n{-}m{+}1)$-dimensional.

Let us consider a closed path around a facet $e$ in clockwise direction, $\Gamma_1,\ldots, \Gamma_s,\Gamma_1$. Then the monodromy is
\begin{equation*}\alpha\mapsto \alpha + \sum_{i=1}^s\big(\lk(\Gamma_i,\gamma)-\lk(\Gamma_{i+1},\gamma)\big)\omega_{ij} \cdot\alpha_e\wedge \nn(e,f)=
 \alpha + \alpha_e\wedge d\Big(\sum_{i=1}^s\omega_{i,j}  \nn(e,f)\Big)=
\alpha.\end{equation*}
The first equality is true by the linearity of the differentials and since the differences between all linking numbers are  $+1$. The second equality holds by the equilibrium condition at $e$.

\emph{General case}:  The monodromy for any closed path is zero since it is decomposable into the elementary closed paths considered in Cases 1 and 2.

Therefore, if we have two paths $\mathcal{C}_1$ and $\mathcal{C}_2$ with
  the same starting chamber $\Gamma_s$ and the same end chamber $\Gamma_e$, then the
  corresponding paths through neighbours will lead to the same differential form, since the closed path $\mathcal{C}_2$ followed by the reversed path $\mathcal{C}_1$ has zero monodromy.

\noindent \textbf{Uniqueness:} This follows from exactly the same argument as in the proof of Theorem~\ref{thm:well3}. 
 \end{proof}

\begin{remark}
    Consider $\Gamma,\Gamma_1,\Gamma_2 \in \pi_{n - m - 1}(\RR^n \setminus C)$ satisfying the identity $\Gamma = \Gamma_1 + \Gamma_2$. Then 
    \begin{equation*}
    l_{\omega, C}(\Gamma) 
    =
    l_{\omega, C}(\Gamma_1) + l_{\omega, C}(\Gamma_2).
    \end{equation*}
    Therefore, once we know the values of the differential liftings for a
    set of generators of $\pi_{n - m - 1}(\RR^n \setminus C)$, we get all
    the values for $l_{\omega, C}$ by the above additivity.
\end{remark}

\begin{theorem}
  Let $C$ be an $m$-dimensional polytopal complex.
  The mapping $(C, \omega) \to l_{\omega, C}$ is a bijection between the
  space of force-loads and the space of differential liftings.
\end{theorem}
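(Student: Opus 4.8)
The plan is to follow verbatim the pattern of the proofs of Theorems~\ref{thm:bij} and~\ref{thm:bij2}, upgrading $1$-forms to $m$-forms and the neighbouring condition~\eqref{eq:nei1} to~\eqref{eq:nei3}. First I would note that both spaces in question are real vector spaces: the space of equilibrium force-loads because the equilibrium condition $\sum_{(e,f)\in I}\omega(f)\mathbf{n}(e,f)=0$ is linear in $\omega$, and the space of differential liftings because~\eqref{eq:nei3} together with the vanishing on the trivial homotopy class is linear in the associated $m$-forms. The assignment $\omega\mapsto l_{\omega,C}$ is linear: if $l_{\omega^1,C}$ and $l_{\omega^2,C}$ are the liftings of $\omega^1$ and $\omega^2$, then $l_{\omega^1,C}+l_{\omega^2,C}$ satisfies~\eqref{eq:nei3} with force-load $\omega^1+\omega^2$ and vanishes on the trivial class, so by the uniqueness part of Theorem~\ref{thm:well3} it coincides with $l_{\omega^1+\omega^2,C}$ (and scaling is analogous). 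Surjectivity onto the space of differential liftings is then immediate, since by Definition~\ref{def:diflfting-polyhedra} and the existence part of Theorem~\ref{thm:well3} every differential lifting arises as $l_{\omega,C}$ for some force-load $\omega$.

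Thus the whole content lies in injectivity, and by linearity it suffices to show that the only force-load mapping to the identically zero $m$-form is $\omega\equiv 0$. So suppose $l_{\omega,C}$ is the zero lifting. Fix any $m$-dimensional face $f\in F_C$ and choose two homotopy classes $\Gamma_1,\Gamma_2\in\pi_{n-m-1}(\RR^n\setminus C)$ that are neighbours at $f$ (for instance the neutral class and the elementary class crossing $f$ exactly once). Applying~\eqref{eq:nei3} to this pair gives
\begin{equation*}
  0 = \alpha_2-\alpha_1 = \big(\lk(\Gamma_1,\gamma)-\lk(\Gamma_2,\gamma)\big)\cdot\omega_f\cdot\alpha_f = \pm\,\omega_f\cdot\alpha_f.
\end{equation*}
Since $\alpha_f=de_1\wedge\cdots\wedge de_m$ is a nonzero constant $m$-form (the $e_i$ being an orthonormal basis of the span of $f$) and the linking-number difference equals $\pm 1$, we conclude $\omega_f=0$. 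As $f\in F_C$ was arbitrary, $\omega\equiv 0$, which gives injectivity and completes the argument.

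The step I expect to require the most care is precisely this injectivity computation, more specifically the guarantee that \emph{every} $m$-dimensional face of $C$ is actually crossed transversally by some neighbouring pair of $(n-m-1)$-spheres, so that the vanishing of the lifting forces $\omega_f=0$ at every face rather than merely on a sub-collection. In the graph case this was transparent because each edge separates two chambers; here one must invoke, for each face $f$, the existence of an elementary homotopy class linking $f$ exactly once — a small $(n-m-1)$-sphere situated in the orthogonal complement to $f$ — exactly as in the construction preceding Definition~\ref{def:diflfting-polyhedra}. I would also flag the convention that ``force-load'' must be read throughout as ``equilibrium force-load'', since the well-definedness asserted by Theorem~\ref{thm:well3} rests on the equilibrium condition at the $(m{-}1)$-dimensional edges (Case~2 of that proof); this reading does not affect the vector-space structure or any of the steps above.
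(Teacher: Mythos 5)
Your proposal is correct and takes essentially the same route as the paper: the paper states this theorem without an explicit proof, implicitly reusing the argument of Theorems~\ref{thm:bij} and~\ref{thm:bij2} (vector-space structure, linearity and surjectivity from the existence/uniqueness result, injectivity because a zero lifting forces every coefficient to vanish via the neighbouring condition), which is exactly what you carried out with~\eqref{eq:nei3} in place of~\eqref{eq:nei1}. Your added points --- that each face $f$ admits a neighbouring pair of homotopy classes crossing it transversally once, and that ``force-load'' must be read as ``equilibrium force-load'' for Theorem~\ref{thm:well3} to apply --- merely fill in details the paper leaves implicit.
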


\subsection{Stresses and force-loads, and relation to the case $m = 1$}
\label{subsec:stress}

The general situation we considered in this section has one minor
difference from the settings of the previous two sections. Here, we
considered force-loads rather than stresses. As we know, the equilibrium
conditions for stresses are
\begin{equation*}
\sum\limits_{\{j \mid j \ne i\}} \omega_{ij} (p_i - p_j) = 0,
\end{equation*}
while the equilibrium conditions for force-loads are normalised as 
\begin{equation*}
\sum\limits_{\{j \mid j \ne i\}} 
\omega_{ij} \frac{p_i - p_j}{\|p_i - p_j\|} = 0.
\end{equation*}
For polytopes, we could also consider the corresponding stresses defined
by the equilibrium condition
\begin{equation*}
  \sum_{(e, f) \in I} \textrm{Vol}(f_*) \omega(f) \mathbf{n}(e, f) = 0,
\end{equation*}
where $f_*$ is the facet corresponding to the $m$-plane $f$. 

Now let us recall the Equation~\eqref{eq:nei2}:
\begin{equation*}
  \alpha_2
  =
  \alpha_1 + (\lk(\Gamma_1, \gamma) - \lk(\Gamma_2, \gamma)) 
  \cdot \omega_{ij} \cdot d(p_i{-}p_j).
\end{equation*}
This equation coincides with the non-normalised version of
Equation~\eqref{eq:nei3}:
\begin{equation*}
  \alpha_2
  =
  \alpha_1 + \textrm{Vol}(f_*) \big((\lk(\Gamma_1, \gamma) - 
  \lk(\Gamma_2, \gamma)) \cdot \omega_{f} \cdot \alpha_f\big).
\end{equation*}
Here, we have added the extra factor $\textrm{Vol}(f_*)$ to the second
summand of the right hand side of Equation~\eqref{eq:nei3}.

\section{On liftings in general settings}\label{Definition of liftings in general settings}

In this section, we introduce general liftings.
In Subsection~\ref{Liftings on regular spheres} we study liftings on regular spheres defined by integration of the differential liftings. 
Further in Subsection~\ref{Grassmannian liftings}, 
we discuss a slightly different notion of liftings on the Grassmannians of planes.

For simplicity we deal with polytopal embeddings of complexes that also cover all  1-frameworks without self-intersections. The theory for self-intersecting frameworks is very similar, so we omit it here. The problem for general complexes remains open.

\subsection{Liftings on regular spheres}\label{Liftings on regular spheres}

In this subsection, we define liftings in the class of regular $k$-spheres in $\mathbb R^n$ 
(i.e.\ differential embeddings of the unit 
$k$-sphere with maximal ranks of Jacobi matrices at every point; note that we allow self-intersections).
In this section we do not consider the case $k=0$.

\subsubsection{Definition of liftings}
Denote the space of regular $k$-spheres passing through $p$
by $\Omega_k(p)$.

\begin{definition}
Consider a regular sphere $\gamma$ in $\Omega_k(p)$.
The homotopy corresponding to the linear homothety of $\gamma$ (at time $1$) to $p$ (at time $0$)
is called a {\it cone homotopy}. We denote it by 
$\mathcal H_p(\gamma)$.
\end{definition}

\begin{definition}
Let $\alpha$ be a decomposable $m$-form defined on $ S\subset \mathbb R^n$ and a point $r\in S$.
Consider some oriented orthonormal basis
$(e_1,\ldots, e_n)$ in which we have 
\begin{equation*}
\alpha|_r=f\cdot e_1\wedge\ldots\wedge e_m.
\end{equation*}
We say that the form
\begin{equation*}
\beta|_r=f\cdot e_{m+1}\wedge\ldots\wedge e_n
\end{equation*}
is the {\it Hodge star} of $\alpha$ at $p$.

\noindent{An arbitrary $m$-form $\alpha$ can be represented as a sum of decomposable forms. Here we define the {\it Hodge star} for $\alpha$ by additivity.}

\noindent
{We say that the form $\beta$ is the {\it Hodge star}
of $\alpha$ if it is the Hodge star at every point of $S$.
Denote the Hodge star by $\star \alpha$.}
\end{definition}

\begin{definition}
Consider an $m$-dimensional polytopal complex $C$ in $\mathbb R^n$ with a force-load $\omega$ on the associated framework $\mathcal F_C$. Let $n>m+1$ and let  $p$ be in the complement of $\mathcal F_C$.
 Consider the function $L_{\mathcal F_C,\omega,p}$ defined on the set of all 
regular spheres of dimension $n-m-1$ as follows:
\begin{equation*}
L_{\mathcal F_C,\omega,p}(\gamma)=
\int\limits_{\mathcal H_p(\gamma)}
\star\alpha[C,\omega].
\end{equation*}
We say that the function $L_{\mathcal F_C,\omega,p}$ is the {\it lifting} of $(\mathcal F_C,\omega)$.
\end{definition}

\begin{remark}
Liftings are continuous in the continuous topology on $\Omega_{n-m-1}(p)$.
There is no significant dependence on the choice of the point $p$ (due to Proposition~\ref{homotopy-proposition-1} below).
\end{remark}

Let us collect several simple statements on integration of differential forms over
regular embeddings without boundary.

\subsubsection{Co-dimension 0 obstacles cycle integration}

Let us start with the following proposition.

\begin{proposition}\label{homotopy-proposition-1}
Let $H_1$ and $H_2$ be two regular homotopies from $\gamma_1$ to $\gamma_2$ for the regular 
$(n-m-1)$-spheres, and let $\alpha$ be any constant $(n-m)$-form
in $\mathbb R^n$.
Then
\begin{equation*}
\int_{H_1}
\alpha
=\int_{H_2}
\alpha.
\end{equation*}
\end{proposition}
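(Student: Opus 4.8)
The plan is to reduce the equality of the two integrals to a single application of Stokes' theorem, exploiting the two facts that $\RR^n$ is convex and that a constant form is closed. First I would regard each homotopy $H_i$ as a smooth singular $(n-m)$-chain, namely a map $H_i\colon [0,1]\times S^{n-m-1}\to\RR^n$ with $H_i(0,\cdot)=\gamma_1$ and $H_i(1,\cdot)=\gamma_2$. The goal is then exactly to show $\int_{H_1}\alpha=\int_{H_2}\alpha$ for the constant $(n-m)$-form $\alpha$, and the natural strategy is to exhibit an $(n-m+1)$-chain whose boundary accounts for the difference $H_1-H_2$.

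The key construction is the straight-line interpolation between $H_1$ and $H_2$. Since $\RR^n$ is convex, the map
$$G(s,t,x)=(1-s)\,H_1(t,x)+s\,H_2(t,x), \qquad (s,t,x)\in[0,1]\times[0,1]\times S^{n-m-1},$$
is well defined, smooth, and restricts to $H_1$ at $s=0$ and to $H_2$ at $s=1$. The decisive observation is the behaviour of $G$ on the remaining boundary faces: at $t=0$ we have $G(s,0,x)=\gamma_1(x)$ and at $t=1$ we have $G(s,1,x)=\gamma_2(x)$, both independent of $s$. Hence these two faces map into sets of dimension at most $n-m-1$, so the pullback of the $(n-m)$-form $\alpha$ vanishes on them identically.

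With this in hand I would apply Stokes' theorem to $G$ and to $\alpha$. The domain $[0,1]_s\times[0,1]_t\times S^{n-m-1}$ has boundary consisting of only four codimension-one faces, since the sphere factor contributes none. The two faces coming from $\{t=0\}$ and $\{t=1\}$ contribute nothing by the degeneracy just noted, while the faces $\{s=0\}$ and $\{s=1\}$ give $\pm H_1$ and $\mp H_2$ respectively. Therefore
$$\int_{H_1}\alpha-\int_{H_2}\alpha=\int_{\partial G}\alpha=\int_{G}d\alpha.$$
Because $\alpha$ is a constant form we have $d\alpha=0$, so the right-hand side vanishes and the two integrals coincide.

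The point requiring the most care is the orientation bookkeeping of the four boundary faces of $[0,1]_s\times[0,1]_t\times S^{n-m-1}$, together with the verification that the two degenerate faces genuinely contribute zero; the latter follows cleanly because their image is $(n-m-1)$-dimensional, so any $(n-m)$-form pulls back to $0$ there. A secondary point worth a remark is that $G$ is merely a smooth singular chain and need not be a regular embedding, and may well self-intersect; this is harmless, as Stokes' theorem for integration of differential forms over smooth chains requires neither regularity nor injectivity, only the smoothness inherited from $H_1$ and $H_2$.
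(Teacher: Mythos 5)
Your proposal is correct and takes essentially the same route as the paper: both fill the gap between $H_1$ and $H_2$ with a two-parameter homotopy and apply Stokes' theorem, concluding from $d\alpha=0$ that the difference of the integrals vanishes. The only difference is one of explicitness --- you construct the filling concretely as the straight-line interpolation $G(s,t,x)=(1-s)H_1(t,x)+sH_2(t,x)$ (using convexity of $\mathbb{R}^n$) and verify that the degenerate faces $\{t=0\}$, $\{t=1\}$ contribute nothing, whereas the paper simply posits a regular homotopy $H\times I$ between $H_1$ and $H_2$ and leaves these boundary bookkeeping details implicit.
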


\begin{proof}
Let $H\times I$ be a regular homotopy between two regular homotopies~$H_1$ and~$H_2$.
Then by Stokes' formula (see, e.g. in~\cite{Arnold1989}) we have
\begin{equation*}
\int_{H_1} \alpha -\int_{H_2} \alpha
=
\int_{H\times I}d\alpha
=
\int_{H\times I}0=0.
\end{equation*}
This implies the statement of the proposition.
\end{proof}

\subsubsection{Co-dimension 1 obstacles cycle integration}

Let us continue with the following definition.

\begin{definition}
Let $\ell$ be an $m$-dimensional plane in $\mathbb R^n$,
we say that a {\it Heaviside form} on the space of regular $m$-spheres with respect to $\ell$ is defined by the following formula
$$
\chi_\ell(\gamma)
    = 
    \lk(\ell, \gamma) \cdot \star d{\ell},
$$
where $d\ell$ is the volume $m$-form on $\ell$.
\end{definition}

Note that $\chi_\ell$ is an $(n{-}m)$-form.

\begin{proposition}\label{homotopy-proposition-2}
 Let $H_1$ and $H_2$ be two regular homotopies from $\gamma_1$ to $\gamma_2$ for the regular 
$(n{-}m{-}1)$-spheres in $\mathbb R^n$,
and let $\ell$ be an $m$-dimensional plane in $\mathbb R^n$.
Then
\begin{equation*}
\int_{H_1}
\chi_\ell
=\int_{H_2}
\chi_\ell.
\end{equation*}   
\end{proposition}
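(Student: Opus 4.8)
The plan is to follow the Stokes strategy of Proposition~\ref{homotopy-proposition-1} as closely as possible, the only new feature being that $\chi_\ell$ is not globally smooth: its scalar coefficient $\lk(\ell,\gamma)$ is locally constant but jumps by $\pm 1$ precisely when the moving sphere crosses the plane $\ell$. First I would reduce the claim to the vanishing of a single period. Since $\partial H_1=\partial H_2=\gamma_2-\gamma_1$, the difference $\Sigma:=H_1-H_2$ is a closed $(n{-}m)$-cycle, and the statement $\int_{H_1}\chi_\ell=\int_{H_2}\chi_\ell$ is equivalent to $\int_{\Sigma}\chi_\ell=0$. I would then choose a regular homotopy of homotopies $\mathcal K$ from $H_1$ to $H_2$ fixing the common ends $\gamma_1,\gamma_2$; this is an $(n{-}m{+}1)$-chain with $\partial\mathcal K=\Sigma$, exactly as $H\times I$ is used in Proposition~\ref{homotopy-proposition-1}.

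Next I would isolate where the argument of Proposition~\ref{homotopy-proposition-1} already applies. Away from $\ell$ the form $\chi_\ell$ equals the constant $(n{-}m)$-form $\star d\ell$ (which is closed, indeed exact, $\star d\ell=d\beta$ for a globally smooth $(n{-}m{-}1)$-form $\beta$) multiplied by the locally constant integer $\lk(\ell,\gamma)$, so $d\chi_\ell=0$ wherever $\chi_\ell$ is smooth. Hence on each open stratum of $\mathcal K$ on which $\lk(\ell,\gamma)$ is constant, Stokes' formula contributes nothing, just as in the constant-form case. What survives are the walls of $\mathcal K$ on which $\lk(\ell,\gamma)$ jumps, i.e.\ the locus where the spheres meet $\ell$. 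Because $\dim\mathcal K+\dim\ell-n=1$, I would put $\mathcal K$ in general position so that this crossing locus is a $1$-manifold, and analyse the Heaviside jump of $\chi_\ell$ transverse to it.

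The crux, and the step I expect to be the main obstacle, is to show that these wall contributions cancel. The jump of $\chi_\ell$ across each wall is the single constant form $\pm\,\star d\ell=\pm\,d\beta$, the sign being the $\pm 1$ change of the linking number recorded in the Remark after Definition~\ref{def:nei3}. Consequently the one-sided Stokes contributions on the two strata adjacent to a wall piece $B$ combine into $\pm\int_{B}\beta$, and I would sum these over all walls. Here I would use that the crossing $1$-manifold can only terminate on $\partial\mathcal K=\Sigma=H_1-H_2$, where the configuration is pinned by the common endpoints $\gamma_1,\gamma_2$; the corresponding boundary pieces are therefore traversed with opposite induced orientations from $H_1$ and from $H_2$, so the total wall term is determined by $\gamma_1,\gamma_2$ alone and telescopes to zero. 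Making the distributional derivative $d\chi_\ell$ along this crossing locus rigorous, and checking that the $\pm 1$ jumps together with the exactness $\star d\ell=d\beta$ indeed force the wall terms to cancel, is the only delicate point; once it is settled one concludes $\int_{\Sigma}\chi_\ell=0$ and hence $\int_{H_1}\chi_\ell=\int_{H_2}\chi_\ell$.
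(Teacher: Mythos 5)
Your global strategy (reduce to the closed cycle $\Sigma=H_1-H_2$, fill it by a homotopy of homotopies $\mathcal{K}$, and run a stratified Stokes argument over the strata of the parameter square on which $\lk(\ell,\cdot)$ is constant) is sound as far as it goes, and it is in fact a cleaner formalisation than the path surgery used in the paper. The gap is the final step, the claimed cancellation of the wall terms, and it cannot be repaired. On each stratum $R$ one has $k_R\int_{\partial R}\star d\ell=k_R\int_R d(\star d\ell)=0$, so summing over strata expresses $\int_\Sigma\chi_\ell$ as $\sum_W \pm\,\epsilon_W\int_{B_W}\star d\ell$, where $W$ runs over the wall curves, $\epsilon_W=\pm1$ is the jump of $\lk$, and $B_W$ is the $(n{-}m)$-dimensional piece of $\mathcal{K}$ lying over $W$. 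Writing $\star d\ell=d\beta$ and applying Stokes on $B_W$ itself (the sphere factor has no boundary), each wall term equals $\pm\epsilon_W\bigl(\int_{\sigma'_W}\beta-\int_{\sigma_W}\beta\bigr)$, where $\sigma_W,\sigma'_W$ are the crossing spheres at the two ends of $W$, i.e.\ slices of $H_1$ or of $H_2$ that touch $\ell$. These terms do not telescope: $\int_\sigma\beta$ is essentially the volume enclosed by the projection of $\sigma$ to $\ell^\perp$, and nothing relates this quantity for a crossing sphere of $H_1$ to that for a crossing sphere of $H_2$ --- a sphere can be inflated arbitrarily while it is linked with $\ell$, between two crossings. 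The orientation remark you make only locates the wall endpoints on $\Sigma$; it does not make the corresponding values of $\int\beta$ equal.

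The obstruction is realised by an explicit example, so no bookkeeping can rescue the step. Take $n=3$, $m=1$, $\ell$ the $z$-axis, and $\gamma_1=\gamma_2$ the unit circle in the plane $z=0$ centred at $(10,0,0)$. Let $H_1$ be any homotopy keeping the circle far from the axis, and let $H_2$ translate the circle until it encircles the axis, inflate it to radius $R$ while it is linked, translate it back off the axis, and deflate it. Then $\int_{H_1}\chi_\ell=0$, while $\int_{H_2}\chi_\ell=\pm\pi(R^2-1)$: the only nonzero contribution comes from the inflation stage, where $\lk=\pm1$ and the swept annulus has area $\pi(R^2-1)$. You should also know that the paper's own proof stalls on exactly the same quantity, by a different route: it splices in a parallel-transport homotopy $S_3/S_4$ (whose contribution genuinely vanishes, since $p_2-p_1$ lies in $\ell$, hence in the kernel of $\star d\ell$) and a connecting homotopy $S_5$ between the two crossing spheres, and in passing from its second to its third displayed equality it silently discards the quantity $-\int_{S_5}\alpha-\int_{S_6}(\alpha+\beta)=\pm\int_{S_5}\star d\ell$, which is precisely the difference of enclosed volumes of the two crossing spheres; Proposition~\ref{homotopy-proposition-1} does not control it, since that proposition compares one fixed constant form along two homotopies, not two different forms along one. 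So your attempt and the paper's proof founder at the same point; with the natural slice-wise reading of $\int_H\chi_\ell$, the statement appears to need extra hypotheses (for instance, restriction to the cone homotopies actually used in the definition of the lifting) rather than a better argument.
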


\begin{proof}
{\it Case 1}. Assume that the homotopies $H_1$ and $H_2$ from $\gamma_1$ to $\gamma_2$ cross the plane $\ell$ at single points $p_1$ and $p_2$ (at parameter values $t_1$ and $t_2$) respectively.

Denote the closed loop formed by $H_1$ and $H_2$ 
(where $H_2$ is passed in the opposite direction).
The parameter values $t_1$ and $t_2$ (for $H_1$ and $H_2$) divide this loop into two parts
$S_1$ and $S_2$ containing $\gamma_1$ and $\gamma_2$, respectively.

Consider the homotopy which is a parallel transform
of $H_1(*,t_1)$ by the vector $p_2{-}p_1$
(here we add an infinitesimal tubular neighbourhood 
from the base point for the $(n{-}m{-}1)$-spheres along the vector $p_2-p_1$, which will not affect the value of the integrals).
Denote the corresponding path by $S_3$ and its inverse by $S_4$.

Finally, let us pick some regular homotopy
that joins $H_1(*,t_1)$ transported to $p_2$
and $H_2(*,t_2)$. Denote this path by  $S_5$ and its inverse by $S_6$ (we assume that this homotopy  keeps the point $p_2$ fixed and the other points of
the homotopy do not intersect $\ell$), see Figure~\ref{fig-ok-2-3}. 

\begin{figure}
\begin{center}
\begin{tabular}{ccc}
$
\begin{array}{c}
\includegraphics[scale = .8]{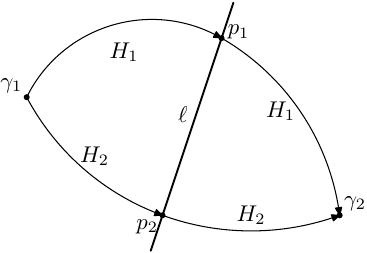}
\end{array}
$
&
$\qquad \longrightarrow \qquad $
&
$
\begin{array}{c}\includegraphics[scale = .8]{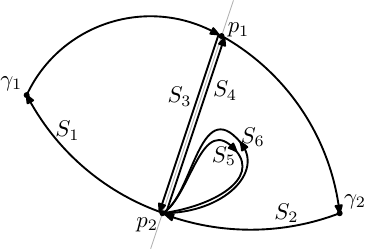}
\end{array}$
\\
\end{tabular}
\end{center}
\caption{Homotopies $H_1$ and $H_2$ and the corresponding paths $S_1,\ldots, S_6$.}\label{fig-ok-2-3}
\end{figure}

Denote
$$
\alpha=\chi_\ell(\gamma_1) \qquad \hbox{and} \qquad
\beta=\chi_\ell(\gamma_2)-\chi_\ell(\gamma_1).
$$
Then we have:
\begin{equation*}
\begin{aligned}
\int_{H_1} \chi_\ell
-\int_{H_2}\chi_\ell
&= \int_{S_1}\alpha+ \int_{S_2}(\alpha+\beta) \\
&= \int_{S_1\cup S_3 \cup S_5} \alpha + \int_{S_2\cup S_6 \cup S_4}(\alpha+\beta) 
 -\int_{S_3}\alpha \\&-\int_{S_5}\alpha - \int_{S_4}(\alpha+\beta) - \int_{S_6}(\alpha+\beta) \\
&= -\int_{S_3}\alpha - \int_{S_4}(\alpha+\beta) \\
&= -\int_{S_3\cup S_4}\alpha - \int_{S_4}\beta = -\int_{S_4}\beta = 0.
\end{aligned}
\end{equation*}

The second and the third equalities hold by Proposition~\ref{homotopy-proposition-1}.
The last equality holds since we integrate along the kernel of the form (recall that $p_1p_2$ is orthogonal to the integral levels of the form).

{\noindent {\it Case 2.}}
Assume that the homotopy 
$H_1$ does not cross the line $\ell$
while $H_2$ crosses $\ell$  transversally at two different points $p_1$ and $p_2$, respectively.

This case is reduced to the previous one
by considering it as a single homotopy $H_1$ and then $H_2$ based in the opposite direction.
The further proof literally repeats the previous case.

{\noindent {\it General Case}}
is a composition of Cases~1 and~2.
\end{proof}

\subsubsection{Co-dimension 2 obstacles cycle integration}

Next we show the following monodromy statement for 
$(m-1)$-dimensional facets in 
$m$-dimensional polytopal complexes.

\begin{proposition}\label{homotopy-proposition-3}
Let us consider a homotopy from a point to itself by $m$-spheres along the $(m+1)$-sphere.
We assume that the convex interior of the 
$(m+1)$-sphere contains the only point of facets at the center of the sphere.
Then the integral of $\star\alpha[C,\omega]$ over this sphere is zero.
\end{proposition}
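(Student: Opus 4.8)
\emph{The plan.} This proposition is the codimension-two (monodromy) counterpart of Case~2 in the proof of Theorem~\ref{thm:well3}, now promoted from the level of the discrete neighbouring condition to the level of the integrated form. Let $e$ denote the facet sitting at the centre of the sweep-sphere $S$, and let $f_1,\dots,f_s$ be the faces of $C$ incident to $e$, i.e.\ those with $(e,f_i)\in I$. The plan is to localise the integral to the affine normal slice of $e$, to write $\star\alpha[C,\omega]$ on $S$ as a weighted sum of elementary Heaviside-type forms attached to the $f_i$, and to show that the only obstruction to the path-independence of this sum is carried by the equilibrium vector at $e$, which vanishes.

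\emph{Reduction to the local model.} First I would replace the given homotopy by a standard small sweep contained in the normal slice $N$, the affine subspace through $e$ orthogonal to $e$, of dimension $n-m+1$. Since the convex interior of $S$ meets $C$ only in the single facet point at its centre, no faces other than $f_1,\dots,f_s$ are involved, and on each segment of the homotopy on which the moving $(n-m-1)$-sphere stays in a fixed homotopy class the integrand is a \emph{fixed} constant form; applying Proposition~\ref{homotopy-proposition-1} on these segments and Proposition~\ref{homotopy-proposition-2} at the crossings, the deformation does not change the value of the integral. In $N$ the facet $e$ collapses to a single point $O$ and each incident face $f_i$ appears as a ray $r_i$ from $O$ in the direction $\nn(e,f_i)$.

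\emph{The form as a sum of Heaviside terms.} In this local picture the neighbouring condition~\eqref{eq:nei3} integrates, exactly as in Theorem~\ref{thm:well3}, to
\[
  \star\alpha[C,\omega](\gamma)=\sum_{i=1}^s \omega_{f_i}\,\lk(\gamma,r_i)\,\star\alpha_{f_i}+(\text{const}),
\]
where the additive constant form comes from the ambient context. For each $i$ the summand $\omega_{f_i}\lk(\gamma,r_i)\star\alpha_{f_i}$ is the ray analogue of the Heaviside form of Proposition~\ref{homotopy-proposition-2}. If $r_i$ were a full $m$-plane, Proposition~\ref{homotopy-proposition-2} would give path-independence of its integral and hence a vanishing contribution over the closed sweep $S$; the only failure of path-independence of the ray version is concentrated at the endpoint $\partial r_i=\{O\}=e$.

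\emph{The equilibrium cancellation and the obstacle.} The heart of the proof is to quantify this boundary defect. By the proof of Proposition~\ref{homotopy-proposition-2} the integral of the ray term is path-independent away from the tip $O$; the defect localised at $O$ contributes, after applying the Hodge star and the factorisation $\alpha_{f_i}=\alpha_e\wedge\nn(e,f_i)$ used in Case~2 of Theorem~\ref{thm:well3}, a term proportional to $\omega_{f_i}\,\alpha_e\wedge\nn(e,f_i)$. Summing over $i$, the total defect is proportional to $\alpha_e\wedge\big(\sum_{(e,f)\in I}\omega(f)\,\nn(e,f)\big)$, which is zero by the equilibrium condition at $e$. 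Consequently $\sum_i\omega_{f_i}\lk(\cdot,r_i)\star\alpha_{f_i}$ is genuinely path-independent, so its integral over the closed sweep $S$ (a homotopy from the centre point to itself) vanishes by the argument of Proposition~\ref{homotopy-proposition-2}; the additive constant form integrates to zero over $S$ by Stokes' theorem as in Proposition~\ref{homotopy-proposition-1}, since $S$ bounds a ball in $\RR^n$ and a constant form is closed. The main obstacle is exactly this localisation-and-identification step: showing rigorously that the path-independence of the ray-Heaviside form fails only at $e$ and with defect $\omega_{f_i}\,\alpha_e\wedge\nn(e,f_i)$, while tracking orientations and signs carefully, so that the individual defects assemble into precisely the equilibrium sum rather than some permuted or mis-weighted combination.
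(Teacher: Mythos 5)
Your plan is a genuinely different route from the paper's, and it does identify two real mechanisms (crossings controlled as in Proposition~\ref{homotopy-proposition-2}, plus an equilibrium cancellation at $e$); but as submitted it has a genuine gap at exactly the step you yourself flag as ``the main obstacle'', and the gap is conceptual rather than a matter of signs. The decomposition
\begin{equation*}
\star\alpha[C,\omega](\gamma)=\sum_{i=1}^s \omega_{f_i}\,\lk(\gamma,r_i)\,\star\alpha_{f_i}+(\text{const})
\end{equation*}
does not exist as stated: a ray (half-plane) $r_i$ bounded by $e$ is not a cycle, and the complement of a closed half-plane in $\RR^n$ is contractible, so there is no function $\lk(\cdot,r_i)$ on homotopy classes that jumps by $\pm1$ only across $r_i$. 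Each individual ``ray term'' is unavoidably multivalued around $e$; only the equilibrium-weighted \emph{sum} is single-valued, and that single-valuedness is precisely Case~2 of Theorem~\ref{thm:well3}, which is already known. Consequently ``the integral of the ray term'' and its ``path-independence defect'' are undefined until you fix a branch-following convention along the sweep; moreover your defect, which ought to be a number, is asserted to be the form $\omega_{f_i}\,\alpha_e\wedge\nn(e,f_i)$. This conflates the monodromy of the \emph{values} of the lifting (which vanishes by equilibrium, via Theorem~\ref{thm:well3}) with the vanishing of the \emph{integral} over the closed sweep, which is strictly stronger. To see that it is strictly stronger: in the two-dimensional local model take a full line $\ell$ through the origin and the form $\lk(\ell,\cdot)\,\beta$ with $\beta$ a constant $1$-form not proportional to $\star d\ell$; this form is perfectly single-valued (zero monodromy), yet its integral over a circle around the origin equals $\beta$ evaluated on the chord between the two crossing points, which is nonzero. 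So ``total defect zero'' does not yield path-independence by itself; one also needs that each jump is $\pm\omega_{f_i}\star\alpha_{f_i}$, i.e.\ annihilates directions lying in its own face, evaluated where the sweep actually meets that face. Assembling these two facts is the entire content of the proposition, and the proposal leaves it unproven.

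For comparison, the paper's proof avoids both the decomposition and any renewed appeal to equilibrium: contract the sweep-sphere to its centre by the cone homotopy; by Propositions~\ref{homotopy-proposition-1} and~\ref{homotopy-proposition-2} this contraction does not change the value of the integral, while the integrand is bounded and the domain shrinks to a point, so the integral tends to $0$; being constant along the contraction, it equals $0$. Equilibrium is consumed once and for all in Theorem~\ref{thm:well3}, which is what makes $\star\alpha[C,\omega]$ exist. If you want to complete your route instead, the workable statement is a base-point formula for a branch-followed piecewise constant form along a closed sweep: the integral splits into contributions of the jumps localized at their crossing points, plus the total monodromy evaluated on data at the base point. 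The first part vanishes because each jump $\pm\omega_{f_i}\star\alpha_{f_i}$ kills any direction in $f_i$ and the crossings lie in the faces; the second vanishes by equilibrium at $e$. Both ingredients are needed, and neither implies the other.
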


\begin{proof}
Let us consider the cone homotopy that shrinks the $(m+1)$-sphere to its center along the cone. 
On the one hand by Propositions~\ref{homotopy-proposition-1}
and~\ref{homotopy-proposition-2},
this will not change the value of the integral.
On the other hand we are shrinking to a single point, and hence the limiting value of the integral tends to 0.

Therefore, the integral along the $(m+1)$-sphere around a facet $e$  is zero.
\end{proof}

\subsubsection{Continuity of liftings}
Let us finally formulate the following general property of liftings.

\begin{theorem}
The lifting is a continuous function in the continuous topology on
$\Omega_{n-m-1}(p)$.
\end{theorem}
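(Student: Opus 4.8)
The plan is to reduce continuity of $L:=L_{\mathcal F_C,\omega,p}$ to a single small integral over a thin connecting homotopy, using the homotopy invariance already proved in Propositions~\ref{homotopy-proposition-1} and~\ref{homotopy-proposition-2}. The first step is to record that $\star\alpha[C,\omega]$ is a finite sum of Heaviside forms: for each facet $f$ of $C$ spanning the $m$-plane $\ell_f$, the neighbouring condition~\eqref{eq:nei3} says that the differential lifting jumps by $\pm\omega_f\alpha_f$ across $f$, hence $\star\alpha[C,\omega]$ jumps by $\pm\omega_f\star\alpha_f$; this is exactly the jump of $\omega_f\chi_{\ell_f}$, because $\lk(\ell_f,\cdot)$ changes by $\pm1$ across $f$ and $\star d\ell_f=\star\alpha_f$. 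Since the differential lifting vanishes on the chamber of the trivial sphere, no constant term remains, so $\star\alpha[C,\omega]=\sum_f\pm\omega_f\chi_{\ell_f}$. By linearity, Propositions~\ref{homotopy-proposition-1} and~\ref{homotopy-proposition-2} then apply to $\star\alpha[C,\omega]$: its integral over any regular homotopy of $(n-m-1)$-spheres depends only on the two bounding spheres.

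Next I would fix $\gamma_1\in\Omega_{n-m-1}(p)$ and take $\gamma_2$ close to $\gamma_1$. Because immersions form an open set and the affine interpolation $\Sigma(t,u)=(1-t)\gamma_1(u)+t\gamma_2(u)$ stays uniformly close to $\gamma_1$ together with its first derivatives, $\Sigma$ is a regular homotopy from $\gamma_1$ to $\gamma_2$ for $\gamma_2$ sufficiently near $\gamma_1$. The concatenation $\mathcal H_p(\gamma_1)\ast\Sigma$ and the cone homotopy $\mathcal H_p(\gamma_2)$ are then two regular homotopies from $p$ to $\gamma_2$ (the degenerate starting sphere at $p$ being handled exactly as in the definition of the cone homotopy). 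Applying the invariance from the first step to these two homotopies with common endpoints yields
\begin{equation*}
L(\gamma_2)-L(\gamma_1)=\int_{\Sigma}\star\alpha[C,\omega].
\end{equation*}

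It remains to estimate this integral, which I would do by a direct pullback bound. Writing $\int_\Sigma\star\alpha[C,\omega]=\int_{[0,1]\times S^{n-m-1}}\Sigma^*\big(\star\alpha[C,\omega]\big)$, the pullback on the coordinate frame is the value of the form on $\partial_t\Sigma\wedge\partial_{u_1}\Sigma\wedge\cdots\wedge\partial_{u_{n-m-1}}\Sigma$. Here $\partial_t\Sigma=\gamma_2-\gamma_1$ is uniformly small, while $\partial_{u_j}\Sigma=(1-t)\partial_{u_j}\gamma_1+t\,\partial_{u_j}\gamma_2$ stays uniformly bounded; since $\star\alpha[C,\omega]$ takes finitely many constant values and so is bounded by some $M$, multilinearity gives a uniform pointwise bound of order $M\,\|\gamma_2-\gamma_1\|_\infty$ on the integrand. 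Integrating over the compact domain $[0,1]\times S^{n-m-1}$ shows $\int_\Sigma\star\alpha[C,\omega]\to0$ as $\gamma_2\to\gamma_1$, which is the desired continuity.

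The step I expect to be the main obstacle is making the pullback estimate rigorous against the precise meaning of the ``continuous topology'' on $\Omega_{n-m-1}(p)$: the argument needs the tangent vectors $\partial_{u_j}\gamma_2$ to stay bounded and the interpolating homotopy $\Sigma$ to be regular, i.e.\ control of first derivatives rather than mere uniform closeness. Verifying that the intended topology on regular spheres supplies such control (and, if not, supplying a smoothing of $\Sigma$ that does) is where the care is required; once the form is known to be bounded, the measure-zero locus where the homotopy meets $C$ contributes nothing and causes no further trouble.
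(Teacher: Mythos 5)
Your argument breaks down at the first step: the claimed identity $\star\alpha[C,\omega]=\sum_f\pm\omega_f\chi_{\ell_f}$ is false. The Heaviside form $\chi_{\ell_f}$ is defined through the linking number with the \emph{entire} $m$-plane $\ell_f$, so $\lk(\ell_f,\cdot)$ jumps by $\pm 1$ whenever a sphere crosses $\ell_f$ anywhere, not only across the facet $f$ itself. Hence the right-hand side of your identity has jumps along the parts of the planes $\ell_f$ lying outside the facets, where $\star\alpha[C,\omega]$ has none; matching jumps across the facets plus agreement on one chamber does not force equality, because the two sides do not have the same discontinuity locus. A concrete counterexample: for a graph framework in $\RR^3$ take a small loop $\gamma$ encircling the line $\ell_e$ far away from the edge $e$. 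Such a $\gamma$ is null-homotopic in $\RR^3\setminus G(p)$, so the differential lifting assigns it $0$, while your sum assigns $\pm\omega_e\, d\ell_e\neq 0$. Put differently, your right-hand side is not even a function of the homotopy class in $\pi_{n-m-1}(\RR^n\setminus C)$, whereas the left-hand side is.

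This is not a reparable technicality within your scheme, because the decomposition was precisely the device by which you obtained homotopy invariance of $\int_H\star\alpha[C,\omega]$ from Propositions~\ref{homotopy-proposition-1} and~\ref{homotopy-proposition-2} alone, by linearity. That invariance genuinely requires controlling what happens when a homotopy sweeps around the $(m-1)$-dimensional faces of $C$; this is exactly the content of Proposition~\ref{homotopy-proposition-3}, which you never invoke, and it is where the paper's proof differs from yours: the paper perturbs the homotopy so that it avoids the $(m-1)$-skeleton and meets the facets transversally in finitely many points, and then decomposes the \emph{homotopy} (not the form) into local pieces handled by Propositions~\ref{homotopy-proposition-1}, \ref{homotopy-proposition-2} and~\ref{homotopy-proposition-3}. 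Your concluding estimate --- that $L(\gamma_2)-L(\gamma_1)=\int_\Sigma\star\alpha[C,\omega]$ over a thin interpolating homotopy $\Sigma$, bounded by a constant times $\|\gamma_2-\gamma_1\|_\infty$ --- is sound and in fact makes the paper's brief outline quantitative; if you replace your first step by the perturbation-and-local-decomposition argument, the remainder of your proof goes through.
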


\begin{proof}
Let us outline the proof.
Consider a small regular perturbation
(within any small distance in the space of continuous homotopies)
changing the original regular 
homotopy to the one avoiding all $(m-1)$-dimensional facets of the complex and 
transversally intersecting the complex in finitely many points.
The resulting homotopy can be decomposed into the homotopies studied in Propositions~\ref{homotopy-proposition-1},
\ref{homotopy-proposition-2}, and~\ref{homotopy-proposition-3}.
\end{proof}

\subsection{Grassmannian liftings}\label{Grassmannian liftings}
We conclude this section with the following remarkable representation of liftings as functions on affine Grassmannians.

The informal idea behind the below definitions is as follows.
First of all, let us restrict the sets of regular $m$-spheres to geometric $m$-spheres passing through a given point $p$.
Secondly, let us ``move'' $p$ to infinity.
That will bring us to the affine Grassmannian of oriented $m$-planes $\Gr (m,n)$.

First of all we give a general definition of distances between planes.

\begin{definition}
The {\it distance between the planes} $\pi_1$ and $\pi_2$ 
spanned by $(p_1,\ldots, p_n)$ and $(q_1,\ldots,q_m)$ respectively is the following number
\begin{equation*}
\dist(\pi_1,\pi_2):=\frac{\det(q_2-q_1,q_3-q_1,\ldots,q_m-q_1,p_1-q_1,\ldots,p_n-q_1)}
{\det(q_2-q_1,q_3-q_1,\ldots, q_m-q_1)\cdot \det(p_2-p_1,p_3-p_2,\ldots, p_n-p_1)}.
\end{equation*}
\end{definition}

Informally, distance functions serve as linear functions over Grassmannians. 

In order to define the values of the liftings at all lines of the Grassmannian we use the following notion. 
We consider the Grassmannian as a subset of the projective plane $\mathbb RP^N$ defined by Pl\"ucker coordinates of planes. 
So we have a natural structure of the differential manifold on Grassmannians.

\begin{definition}
Let $\mathcal F_C$ be a framework of an $m$-dimensional polytopal complex $C$ in $\mathbb R^n$. 
A differentiable path in $\Gr(n{-}m{-}1, n)$ is {\it $\mathcal F_C$-simple} if it intersects 
$\mathcal F_C$ only finitely many times;
each of the intersections, except the very last point of the path, is in the interior of some 
$m$-dimensional face and it is transversal to this face (see Figure~\ref{figure-ok-1}). The last line in the path can be taken arbitrarily.
\end{definition}

\begin{figure}
\includegraphics[scale = 1.1]{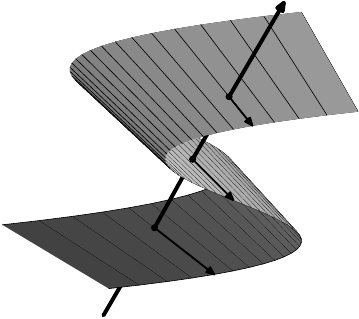}
\caption{Transversal intersections of a differentiable path with one of the elements of $\mathcal F_C$}\label{figure-ok-1}
\end{figure}

Let us define a lifting along $\mathcal F_C$-simple paths.

\begin{definition}
Consider an $\mathcal F_C$-simple path $H$ crossing an $m$-dimensional face $f$ of $\mathcal F_C$. 
We say that the {\it sign of the crossing} is the number
$$
\mu(H,f):=\sign\Big(\frac{\partial(\dist(H(t),f))}{\partial t}\Big).
$$
We say that a simple path $H$ intersects an $m$-dimensional face $f$ of $\mathcal F_C$ with {\it sign multiplicity} $k$ if the total sum of its sign of crossings with $f$ is $k$.
\end{definition}

\begin{definition}\label{Grassmanian-lifting}
Consider a framework $\mathcal F_C$ of an $m$-dimensional polytopal complex $C$ in $\mathbb R^n$ with a force-load $\omega$.
Let us fix some $\ell_0\in \Gr(n{-}m{-}1,n)$ that is far away (namely it does not intersect the convex hull of $\mathcal F_C$).
Let $\ell$ be an arbitrary plane in $\Gr(n{-}m{-}1,n)$.
Let now $H$ be an $\mathcal F_C$-simple path
from $\ell_0$ to $\ell$.
We say that the lifting at $\ell$ with respect to 
the path $H$ is the following number
$$
L_{\mathcal F_C,\omega,H}(\ell):=\sum\limits_{f\in F_C}
\mu(H,f) \cdot\omega_f \cdot \dist(\ell,f).
$$
\end{definition}

We conclude this paper with the following general theorem.

\begin{theorem}
The lifting $L_{\mathcal F_C,\omega,H}(\ell)$
does not depend on the choice of an
$\mathcal F_C$-simple path $H$.
\end{theorem}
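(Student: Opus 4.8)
The plan is to reduce the claim to a monodromy computation over closed loops, exactly in the spirit of the proofs of Theorems~\ref{thm:well2} and~\ref{thm:well3}. Given two $\mathcal F_C$-simple paths $H_1$ and $H_2$ from $\ell_0$ to $\ell$, I would form the closed loop $\Lambda = H_1 * \overline{H_2}$ obtained by following $H_1$ and then $H_2$ backwards. Since the sign multiplicity $\mu(\,\cdot\,,f)$ is additive under concatenation and changes sign under reversal, one gets $\mu(H_1,f)-\mu(H_2,f)=\mu(\Lambda,f)$ for every face $f$. Because $\dist(\ell,f)$ depends only on the common endpoint $\ell$ and not on the path, the difference of the two liftings equals
\[
L_{\mathcal F_C,\omega,H_1}(\ell)-L_{\mathcal F_C,\omega,H_2}(\ell)
=\sum_{f\in F_C}\mu(\Lambda,f)\cdot\omega_f\cdot\dist(\ell,f).
\]
Hence it suffices to show that this \emph{monodromy} vanishes for every closed $\mathcal F_C$-simple loop $\Lambda$ based at $\ell_0$.

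Next I would interpret $\mu(\Lambda,f)$ geometrically as the signed number of transversal crossings of $\Lambda$ with the \emph{wall} $W_f=\{\,\ell'\in\Gr(n{-}m{-}1,n)\mid \ell' \text{ meets } f\,\}$, co-oriented by $\sign\big(\partial_t\dist(\,\cdot\,,f)\big)$. A dimension count shows each $W_f$ has codimension one, while its non-smooth locus lies in the codimension-two strata $W_e=\{\,\ell'\mid \ell' \text{ meets the facet } e\,\}$ for facets $e\subset f$. Using transversality to put $\Lambda$ in general position with respect to this stratification, I would decompose it into a product of elementary loops of two kinds, mirroring Cases~1 and~2 of the earlier monodromy proofs (and Propositions~\ref{homotopy-proposition-1}--\ref{homotopy-proposition-3}).

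For an elementary loop of the first kind, crossing a single wall $W_f$ and returning, the two crossings carry opposite co-orientations, so $\mu=0$ and the contribution vanishes. For an elementary loop of the second kind, a small meridian loop linking one stratum $W_e$, the walls $W_f$ with $f\supset e$ meet $W_e$ like the pages of a book along their common spine, so the loop crosses each such wall exactly once and, for the counterclockwise orientation, with coefficient $+1$ (as in the positivity observed in the proof of Theorem~\ref{thm:well}). Its contribution is therefore $\sum_{f\supset e}\omega_f\dist(\ell,f)$. Here I would invoke the precise content of ``distance functions are linear over Grassmannians'': for all $m$-planes $f$ containing the common $(m{-}1)$-plane $e$ the distance is linear in the normal direction, $\dist(\ell,f)=\langle w,\nn(e,f)\rangle$ for a single vector $w=w(\ell,e)$ independent of $f$. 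The contribution then equals $\big\langle w,\sum_{f\supset e}\omega_f\,\nn(e,f)\big\rangle$, which vanishes by the equilibrium condition of $\mathcal F_C$ at $e$. Summing over the decomposition gives zero monodromy for every closed loop, proving path-independence.

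The main obstacle I expect is the bookkeeping in the middle step. The individual multiplicities $\mu(\Lambda,f)$ are \emph{not} homotopy invariants: each jumps by $\pm1$ as the loop is swept across a codimension-two stratum $W_e$, so only the weighted combination $\sum_f\mu(\Lambda,f)\,\omega_f\,\dist(\ell,f)$ is invariant, and one must verify that precisely these jumps are produced by the type-two elementary loops. Making the generic sweeping decomposition and the co-orientation conventions rigorous, together with extracting the linearity identity $\dist(\ell,f)=\langle w,\nn(e,f)\rangle$ from the determinantal definition of $\dist$, is the technical heart; once these are in place the vanishing follows formally from the equilibrium condition, just as in Theorems~\ref{thm:well2} and~\ref{thm:well3}.
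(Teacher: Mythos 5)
Your proposal is correct in outline and shares the paper's overall skeleton---reduce path-independence to vanishing monodromy along closed loops, then decompose a generic closed loop into elementary monodromies around the codimension-two strata---but it resolves the key step by a genuinely different argument. The paper proves the elementary monodromy vanishes by an explicit trigonometric computation for a \emph{trivalent} vertex in $\Gr(1,3)$, then reduces a vertex of arbitrary valency to a sum of trivalent ones (via the quantisation techniques of~\cite{karpenkov2021frame}), and finally handles the general $m$-polytopal case in $\RR^n$ by projecting a monodromy of parallel $(n{-}m{-}1)$-planes around a facet onto the monodromy of a line around a trivalent vertex in $\RR^3$. You instead kill the elementary monodromy uniformly, for any valency and in any dimension, via the linearity identity $\dist(\ell,f)=\langle w(\ell,e),\nn(e,f)\rangle$ for all faces $f\supset e$, after which vanishing is literally the equilibrium condition $\sum_{(e,f)\in I}\omega(f)\,\nn(e,f)=0$ at $e$. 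This identity is genuine: in the determinantal definition of $\dist$, the numerator is linear in the column $\nn(e,f)$ while the remaining columns and the denominator depend only on $e$ and $\ell$; indeed the paper's own trivalent computation implicitly confirms it, since the three displayed distances are exactly $\langle w,u\rangle$ with $w$ proportional to $(-b,a,\ast)$ and $u$ the respective edge directions, so the displayed cancellation is nothing but equilibrium at $p_1$. What each approach buys: the paper leans on a concrete verifiable computation and existing decomposition machinery, at the cost of the trivalent reduction and a projection step carrying its own genericity assumptions; your route is dimension-free, avoids both, and makes transparent that Grassmannian path-independence is driven by the same mechanism as Theorems~\ref{thm:well2} and~\ref{thm:well3}. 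Both treatments leave the wall-crossing/transversality bookkeeping at a comparable, informal level of rigour, which you honestly flag.
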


\begin{proof}
{\bf The case of graph frameworks in $\mathbb R^3$.}
Here we consider the Grassmannian $\Gr(1,3)$.

Direct calculations show 
that the monodromy around a trivalent vertex is trivial.
Without loss of generality we go around the three edges $p_1p_2$, $p_1p_3$ and $p_1p_4$, where
\begin{equation*}
p_1=(0,0,0), \quad
p_2=(1,0,0), \quad
p_3=(\cos\alpha,\sin\alpha,0), \quad
p_4=(\cos\beta,\sin\beta,0).
\end{equation*}
(Here $\alpha\ne\beta$ and
$\alpha,\beta \in (0,\pi)\cup (\pi,2\pi)$). 
The starting and the ending line is $p_5p_6$ where
\begin{equation*}
p_5=(a,b,0), \quad 
p_6=(c,d,1).
\end{equation*}
In this case, the force-loads on edges $p_1p_2$, $p_1p_3$ and $p_1p_4$
are   
$\lambda\sin(\alpha-\beta)$, $\lambda\sin \beta$,
and $-\lambda\sin \alpha$, respectively (for some real $\lambda$).
Then the contribution of the monodromy around the origin (passed clockwise) is as follows:
\begin{equation*}
\begin{array}{l}
\lambda\sin(\alpha-\beta)\cdot \dist(p_5p_6,p_1p_2) + 
\lambda\sin\beta\cdot \dist(p_5p_6,p_1p_3) -
\lambda\sin\alpha\cdot \dist(p_5p_6,p_1p_4))
\\
\displaystyle
\qquad=\Bigg(-{{\frac {b\sin \left( \alpha-\beta \right)}{\sqrt { \left( c-a
 \right) ^{2}+ \left( d-b \right) ^{2}+1}}}}+
{ {\frac {
   \left( a\sin \alpha -b\cos \alpha \right) \sin \beta}{\sqrt { \left( c-a \right) ^{2}+ \left( 
d-b \right) ^{2}+1}}}}
-
{ {\frac {\sin \alpha   \left( a\sin \beta  -
b \cos \beta  \right)}{\sqrt { \left( c-a
 \right) ^{2}+ \left( d-b \right) ^{2}+1}}}}\Bigg)\cdot\lambda
 \\
\qquad =0.
\end{array}
\end{equation*}

Let us now consider a vertex of general valency $d$.
Such vertex can be  decomposed locally at the vertex into the sum of trivalent vertices of the original graph
(using the techniques of quantisations introduced in~\cite{karpenkov2021frame}).
Let us illustrate it with the following example:

$$
\includegraphics[scale = .75]{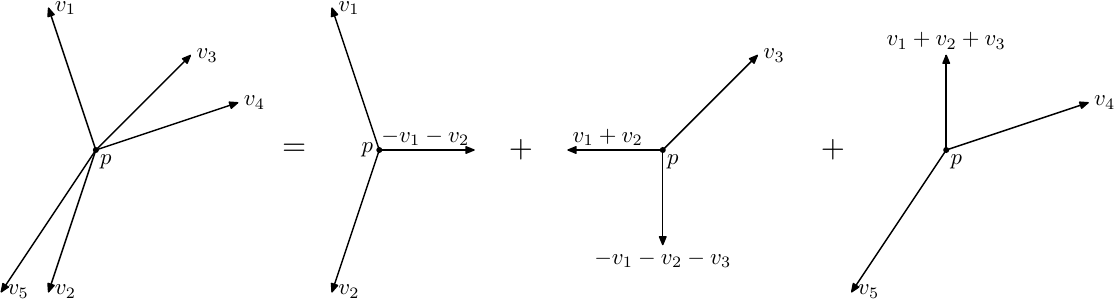}
$$

Any $\mathcal F_C$-simple closed path is decomposable into several monodromies around single vertices.
Hence, by additivity the value of the corresponding lifting is zero.

Therefore, two different $\mathcal F_C$-simple paths $H_1$ and $H_2$ to the same line $\ell$ define the same lifting (as the closed path obtained by $H_1$ and then $H_2$ inverse provides the zero lifting).
This concludes the proof for the graph frameworks in $\mathbb R^3$.

{\noindent
{\bf The case of $m$-polytopal frameworks in $\mathbb R^n$.}}
Let us sketch the proof in this case; in fact it repeats the proof of the previous theorem.

Firstly, we consider a monodromy of parallel 
$(n{-}m{-}1)$-planes around some trivalent facet $e\in E_C$. Here we assume that any line of $e$
is not parallel to a line in the parallel planes.
This monodromy can be projected to the 
monodromy of a line around a trivalent vertex of a graph
in $\mathbb R^3$. Hence it is trivial by the above case.

Secondly, the monodromy about an arbitrary facet $e$, can be decomposed into the sum of monodromies about trivalent copies of $e$.

Thirdly, by additivity the lifting along
any $\mathcal F_C$-simple closed path
is zero.

Therefore, the liftings along different paths 
provide the same value.
\end{proof}

\begin{remark}
One can restrict liftings
to a pencil of parallel planes in Grassmannians.
Then (by identifying the points of the parallel planes) we can project the 
liftings to the liftings over $\pi_0$ 
of the corresponding projected polytopal 
$m$-dimensional complex in $\mathbb R^{m+1}$.
(Here we assume that the projection of $\mathcal F_C$ does not reduce the dimensions of its faces.)
\end{remark}

\section*{Acknowledgements}
The authors are grateful to C.~Lescop for useful discussions.
F.\,M.\ was partially supported by the FWO grants G0F5921N (Odysseus) and G023721N, 
the KU Leuven iBOF/23/064 grant, and the UiT Aurora MASCOT project.
C.\,M.\ was partially supported by the Austrian Science Fund (FWF) through
grant I~4868 (SFB-Transregio “Discretization in Geometry and Dynamics”). B.S. was partially supported by the ICMS Knowledge Exchange Catalyst Programme.


\begin{thebibliography}{10}

\bibitem{Arnold1989}
V.~I. Arnold.
\newblock {\em Mathematical methods of classical mechanics}, volume~60 of {\em
  Graduate Texts in Mathematics}.
\newblock Springer-Verlag, New York, second edition, 1989.
\newblock Translated from the Russian by K. Vogtmann and A. Weinstein.

\bibitem{Au87}
F.~Aurenhammer.
\newblock A criterion for the affine equivalence of cell complexes in {${\bf
  R}^d$} and convex polyhedra in {${\bf R}^{d+1}$}.
\newblock {\em Discrete Comput. Geom.}, 2(1):49--64, 1987.

\bibitem{bobenko-pottmann-wallner}
A.~I. Bobenko, H.~Pottmann, and J.~Wallner.
\newblock A curvature theory for discrete surfaces based on mesh parallelity.
\newblock {\em Math. Ann.}, 348(1):1--24, 2010.

\bibitem{borcea2014liftings}
C.~Borcea and I.~Streinu.
\newblock Liftings and stresses for planar periodic frameworks.
\newblock In {\em Proceedings of the thirtieth annual symposium on
  Computational geometry}, pages 519--528, 2014.

\bibitem{BorStr}
C.~Borcea and I.~Streinu.
\newblock Liftings and stresses for planar periodic frameworks.
\newblock {\em Discrete Comput. Geom.}, 53(4):747--782, 2015.

\bibitem{Con03}
R.~Connelly, E.~D. Demaine, and G.~Rote.
\newblock Straightening polygonal arcs and convexifying polygonal cycles.
\newblock {\em Discrete Comput. Geom.}, 30(2):205--239, 2003.

\bibitem{cooper}
Z.~Cooperband, R.~Ghrist, and J.~Hansen.
\newblock A cosheaf theory of reciprocal figures: Planar and higher genus
  graphic statics, 2023.
\newblock arXiv:2311.12946.

\bibitem{crapo1993}
H.~Crapo and W.~Whiteley.
\newblock 3-stresses in 3-space and projections of polyhedral 3-surfaces:
  reciprocals, liftings and parallel configurations.
\newblock {\em unpublished manuscript}, 1993.

\bibitem{CW93}
H.~Crapo and W.~Whiteley.
\newblock Autocontraintes planes et poly\`edres projet\'{e}s. {I}. {L}e motif
  de base.
\newblock {\em Structural Topology}, (20):55--78, 1993.
\newblock Dual French-English text.

\bibitem{crapo1994spaces}
H.~Crapo and W.~Whiteley.
\newblock Spaces of stresses, projections and parallel drawings for spherical
  polyhedra.
\newblock {\em Beitr\"age zur Algebra und Geometrie}, 35(2):259--282, 1994.

\bibitem{cremona1872figure}
L.~Cremona.
\newblock {\em Le figure reciproche nella statica grafica del prof. L.
  Cremona}.
\newblock Tipografia di Giuseppe Bernardoni, 1872.

\bibitem{EadesGarvan}
P.~Eades and P.~Garvan.
\newblock Drawing stressed planar graphs in three dimensions.
\newblock In {\em Graph drawing ({P}assau, 1995)}, volume 1027 of {\em Lecture
  Notes in Comput. Sci.}, pages 212--223. Springer, Berlin, 1996.

\bibitem{ErLin}
J.~Erickson and P.~Lin.
\newblock A toroidal {M}axwell-{C}remona-{D}elaunay correspondence.
\newblock {\em J. Comput. Geom.}, 12(2):55--85, 2021.

\bibitem{Gauss1877}
K.~F. Gauss.
\newblock Zur mathematischen {T}heorie der electrodynamischen {W}irkungen,
  manuscript, first published in his {W}erke vol. 5.
\newblock {\em K\"onigl. Ges. Wiss. G\"ottingen, G\"ottingen}, (1):601--629,
  1877.

\bibitem{hopcroft1992paradigm}
J.~E. Hopcroft and P.~J. Kahn.
\newblock A paradigm for robust geometric algorithms.
\newblock {\em Algorithmica}, 7(1-6):339--380, 1992.

\bibitem{karpenkov2021frame}
O.~Karpenkov.
\newblock The combinatorial geometry of stresses in frameworks.
\newblock {\em Discrete \& Computational Geometry}, 65, 01 2021.

\bibitem{karpenkov+2021}
O.~Karpenkov and C.~M{\"u}ller.
\newblock Geometric criteria for realizability of tensegrities in higher
  dimensions.
\newblock {\em SIAM Journal on Discrete Mathematics}, 35(2):637--660, 2021.

\bibitem{karpenkov+2022}
O.~Karpenkov, C.~M{\"u}ller, G.~Panina, B.~Servatius, H.~Servatius, and
  D.~Siersma.
\newblock Equilibrium stressability of multidimensional frameworks.
\newblock {\em Eur. J. Math.}, 8(1):33--61, 2022.

\bibitem{karpenkovWallner-2014}
O.~Karpenkov and J.~Wallner.
\newblock On offsets and curvatures for discrete and semidiscrete surfaces.
\newblock {\em Beitr. Algebra Geom.}, 55:207–228, 2014.

\bibitem{marina}
M.~Konstantatou, P.~D'Acunto, and A.~McRobie.
\newblock Polarities in structural analysis and design: n- dimensional graphic
  statics and structural transformations.
\newblock {\em Int. J. Solids and Structures}, 152-153:272--293, 2018.

\bibitem{maxwell1864xlv}
J.~C. Maxwell.
\newblock On reciprocal figures and diagrams of forces.
\newblock {\em The London, Edinburgh, and Dublin Philosophical Magazine and
  Journal of Science}, 27(182):250--261, 1864.

\bibitem{maxwell1870reciprocal}
J.~C. Maxwell.
\newblock I. {O}n reciprocal figures, frames, and diagrams of forces.
\newblock {\em Earth and Environmental Science Transactions of the Royal
  Society of Edinburgh}, 26(1):1--40, 1870.

\bibitem{milnor1997topology}
J.~W. Milnor.
\newblock {\em Topology form Differential Point of View}.
\newblock The University Press of Virginia, 1965.

\bibitem{mbmm}
T.~Mitchell, W.~Baker, A.~McRobie, and A.~Mazurek.
\newblock Mechanisms and states of self-stress of planar trusses using graphic
  statics, part i: The fundamental theorem of linear algebra and the airy
  stress function.
\newblock {\em International Journal of Space Structures}, 31(2-4):85--101,
  2016.

\bibitem{mohammadi2022rational}
F.~Mohammadi and X.~Wu.
\newblock Rational tensegrities through the lens of toric geometry.
\newblock {\em accepted in Computational Geometry: Theory and Applications},
  2023.

\bibitem{nguyen2017convex}
N.~A. Nguyen, M.~Gulan, S.~Olaru, and P.~Rodriguez-Ayerbe.
\newblock Convex lifting: Theory and control applications.
\newblock {\em IEEE Transactions on Automatic Control}, 63(5):1243--1258, 2017.

\bibitem{nguyen2017constructive}
N.~A. Nguyen, S.~Olaru, P.~Rodriguez-Ayerbe, M.~Hovd, and I.~Necoara.
\newblock Constructive solution of inverse parametric linear/quadratic
  programming problems.
\newblock {\em Journal of Optimization Theory and Applications}, 172:623--648,
  2017.

\bibitem{Pak}
I.~Pak.
\newblock {\em Lectures on Discrete and Polyhedral Geometry}.
\newblock preprint, https://www.math.ucla.edu/~pak/geompol8.pdf, 2010.

\bibitem{ribo}
A.~Rib\'o~Mor.
\newblock {\em Realization and counting problems for planar structures}.
\newblock Thesis (Ph.D.) - FU Berlin (Germany), 2006.

\bibitem{RRS}
A.~Rib\'{o}~Mor, G.~Rote, and A.~Schulz.
\newblock Small grid embeddings of 3-polytopes.
\newblock {\em Discrete Comput. Geom.}, 45(1):65--87, 2011.

\bibitem{richter2006realization}
J.~Richter-Gebert.
\newblock {\em Realization spaces of polytopes}.
\newblock Springer, 1996.

\bibitem{Rybnikov991}
K.~Rybnikov.
\newblock Stresses and liftings of cell-complexes.
\newblock {\em Discrete Comput. Geom.}, 21(4):481--517, 1999.

\bibitem{Rybnikov99}
K.~Rybnikov.
\newblock {\em Polyhedral partitions and stresses}.
\newblock ProQuest LLC, Ann Arbor, MI, 2000.
\newblock Thesis (Ph.D.)--Queen's University (Canada).

\bibitem{shonkwiler2011higher}
C.~Shonkwiler and D.~Vela-Vick.
\newblock Higher-dimensional linking integrals.
\newblock {\em Proceedings of the American Mathematical Society},
  139(4):1511--1519, 2011.

\bibitem{sitharam2018handbook}
M.~Sitharam, A.~S.~J. John, and J.~Sidman.
\newblock {\em Handbook of Geometric Constraint Systems Principles}.
\newblock Discrete Mathematics and Its Applications. CRC Press, Taylor \&
  Francis Group, 2018.

\bibitem{streinu}
I.~Streinu.
\newblock Pseudo-triangulations, rigidity and motion planning.
\newblock {\em Discrete Comput. Geom.}, 34(4):587--635, 2005.

\bibitem{streinuerr}
I.~Streinu.
\newblock Erratum to ``pseudo-triangulations, rigidity and motion planning'''.
\newblock {\em Discrete Comput. Geom.}, 35(2):558, 2006.
\newblock Publisher's erratum to \cite{streinu}.

\bibitem{Sugi}
K.~Sugihara.
\newblock Realizability of polyhedrons from line drawings.
\newblock In {\em Computational morphology}, volume~6 of {\em Mach.
  Intelligence Pattern Recogn.}, pages 177--206. North-Holland, Amsterdam,
  1988.

\bibitem{handbookDCG}
J.~E. Toth, C. D.~Goodman and J.~O'Rourke, editors.
\newblock {\em Handbook of discrete and computational geometry}.
\newblock Discrete Mathematics and its Applications (Boca Raton). Chapman \&
  Hall/CRC, Boca Raton, FL, third edition, 2018.

\bibitem{WW82}
W.~Whiteley.
\newblock Motions and stresses of projected polyhedra.
\newblock {\em Structural Topology}, (7):13--38, 1982.
\newblock With a French translation.

\bibitem{WAC}
W.~Whiteley, P.~F. Ash, E.~Bolker, and H.~Crapo.
\newblock Convex polyhedra, {D}irichlet tessellations, and spider webs.
\newblock In {\em Shaping space}, pages 231--251. Springer, New York, 2013.

\end{thebibliography}

\bigskip 

\noindent
\footnotesize \textbf{Authors' addresses:}

\bigskip

\noindent{Department of Mathematical Sciences, University of Liverpool,
UK} 
\hfill \texttt{karpenk@liverpool.ac.uk}
\medskip  

\noindent{Departments of Mathematics and Computer Science, KU Leuven, Belgium} 
\hfill \texttt{fatemeh.mohammadi@kuleuven.be}
\medskip

\noindent{Institute of Discrete Mathematics and Geometry, TU Wien, Austria} 
\hfill \texttt{cmueller@geometrie.tuwien.ac.at}
\medskip

\noindent{Department of Mathematics and Statistics, Lancaster University, Lancaster, UK} 
\hfill \texttt{b.schulze@lancaster.ac.uk}

\end{document}